\theoremstyle{plain}
\newtheorem{lem}{Lemma}[section]
\newtheorem{cor}[lem]{Corollary}
\newtheorem{prop}[lem]{Proposition}
\newtheorem{thm}[lem]{Theorem}
\theoremstyle{definition}
\newtheorem{defn}[lem]{Definition}
\newtheorem{ex}[lem]{Example}
\newtheorem{problem}[lem]{Problem}
\newtheorem{disc}[lem]{Remark}
\newtheorem{rmk}[lem]{Remark}
\newcommand{\HH}{\operatorname{H}}
\newcommand{\Hom}{\operatorname{Hom}}
\newcommand{\im}{\operatorname{Im}}
\newcommand{\Ker}{\operatorname{Ker}}
\newcommand{\ideal}[1]{\mathfrak{#1}}
\newcommand{\m}{\ideal{m}}
\newcommand{\n}{\ideal{n}}
\newcommand{\p}{\ideal{p}}
\newcommand{\q}{\ideal{q}}
\newcommand{\fm}{\ideal{m}}
\newcommand{\fn}{\ideal{n}}
\newcommand{\fp}{\ideal{p}}
\newcommand{\fq}{\ideal{q}}
\newcommand{\fa}{\ideal{a}}
\newcommand{\ol}{\overline}
\newcommand{\wti}{\widetilde}
\newcommand{\bbz}{\mathbb{Z}}
\newcommand{\bbq}{\mathbb{Q}}
\newcommand{\bbr}{\mathbb{R}}
\newcommand{\xra}{\xrightarrow}
\newcommand{\xla}{\xleftarrow}
\newcommand{\onto}{\twoheadrightarrow}
\newcommand{\into}{\hookrightarrow}
\renewcommand{\geq}{\geqslant}
\renewcommand{\leq}{\leqslant}
\newcommand{\Ext}[4][R]{\operatorname{Ext}_{#1}^{#2}(#3,#4)}
\newcommand{\Otimes}[3][R]{#2\otimes_{#1}#3}
\renewcommand{\Hom}[3][R]{\operatorname{Hom}_{#1}(#2,#3)}	
\newcommand{\Tor}[4][R]{\operatorname{Tor}^{#1}_{#2}(#3,#4)}
\newcommand{\ssm}{\smallsetminus}
\newcommand{\Supp}{\operatorname{Supp}}
\newcommand{\la}{\lambda}
\numberwithin{equation}{lem}
\begin{document}

\bibliographystyle{amsplain}

\author{Jason Boynton}

\email{jason.boynton@ndsu.edu}

\urladdr{http://math.ndsu.nodak.edu/faculty/boynton.shtml}

\author{Sean Sather-Wagstaff}

\email{sean.sather-wagstaff@ndsu.edu}

\urladdr{http://www.ndsu.edu/pubweb/\~{}ssatherw/}

\address{Department of Mathematics,
North Dakota State University Dept \# 2750,
PO Box 6050,
Fargo, ND 58108-6050
USA}

\thanks{Sean Sather-Wagstaff was supported in part by a grant from the NSA}

\title{Coherence conditions in flat regular pullbacks}

\date{\today}


\keywords{Coherent rings; finite conductor rings, generalized GCD rings; integer valued polynomials; pullbacks; quasi-coherent rings}
\subjclass[2010]{
13A99, 
13B40, 
13C11, 
13D07, 
13F20, 
13G05
}

\begin{abstract}
We investigate the behavior of four coherent-like
conditions in regular conductor squares.  In particular, we find necessary
and sufficient conditions in order that a pullback ring be a finite
conductor ring, a coherent ring, a generalized GCD ring, or quasi-coherent
ring. As an application of these results, we are able to determine exactly
when the ring of integer-valued polynomials determined by a finite subset
possesses one of the four coherent-like properties.
\end{abstract}

\maketitle

\section{Introduction}\label{sec150509a}

Throughout this paper, the term ``ring'' is short for ``commutative ring with identity'', and ``module'' is short for ``unital module''.

\

Over the last half century, there has been an abundance of research
dedicated to the study of the transference of various ring and
ideal-theoretic properties in pullback constructions.  It is well-known
that these pullback constructions provide a rich source of (counter)examples
in commutative algebra; see~\cite{luc} for a survey.  In his book \cite{G3}, Gilmer popularized a very special case of a pullback called the $D+M$
construction.  Gilmer's construction begins with a valuation domain $V$
containing a retract field $K$, meaning that $V=K+M$ for some maximal ideal $M$ of $V$. Let $D$ be a subring of $K$, and form the subring $D+M\subset V$.
In~\cite{dobbs:wDMc}, Dobbs and Papick find necessary and sufficient
conditions on $K$ and $D$ (or $M$) in order that $D+M$ is a coherent ring. 
In \cite{brewerandrutter}, Brewer and Rutter dropped the valuation condition on $T$ and
found similar conditions on the constituent rings so that the ring $D+M$ is
coherent.  In \cite{gabelli:ccp}, Houston and Gabelli offer improved results on the
transference of coherence and other coherent-like conditions by removing the
assumption that the domain $T$ contains a retract field.  

The purpose of this article is to investigate the transference of
coherent-like conditions in a more general setting that is very similar to
the pullback construction of \cite{FHP}; see Problem~\ref{prob150614a}.  We are primarily concerned with
finding conditions under which the (quasi) coherent, finite conductor, and
generalized GCD properties ascend and descend in a pullback.  The
construction central to our study, is described here explicitly. Start with
a ring surjection $\eta _{1}\colon T\twoheadrightarrow B$ and an inclusion
of rings $\iota _{1}\colon A\hookrightarrow B$ with $B\neq 0$, hence $A\neq 0
$. Let $R$ denote the pullback of these maps, that is, the subring of $A\times T$ consisting of all elements $(a,t)$ such that $\iota _{1}(a)=\eta
_{1}(t)$. The natural maps $\eta _{2}\colon R\twoheadrightarrow A$ and $\iota _{2}\colon R\hookrightarrow T$ yield a commutative diagram of ring
homomorphisms 
\begin{gather}
\begin{split}
\xymatrix{
R\ \ar@{^(->}[r]^-{\iota_2} \ar@{->>}[d]_-{\eta_2}
& T\ar@{->>}[d]^-{\eta_1} \\ 
A \ \ar@{^(->}[r]^-{\iota_1} &B}
\end{split}
\tag{$\square$}
\end{gather}
such that $\operatorname{Ker}(\eta _{2})=\operatorname{Ker}(\eta _{1})$. The common ideal $\operatorname{Ker}(\eta _{i})$ is the largest common ideal of $R$ and $T$; it is
denoted $C$ and called the \textit{conductor} of $T$ into $R$. When $C$
contains a $T$-regular element, we say that the conductor square $(\square )$
is \textit{regular}.

Conductor squares can also be built as follows. Let $T$ be a commutative
ring with subring $R$, and suppose that $R$ and $T$ have a common, non-zero
ideal. We call the largest common ideal $C$ the \textit{conductor} of $T$
into $R$. Setting $A=R/C$ and $B=T/C$, we obtain a commutative diagram $(\square )$ which is a conductor square.

It is common in the study of pullback constructions to assume that $T$ is an
integral domain and that $C$ is a maximal ideal of $T$. However, important
examples are obtained by allowing zero-divisors in the pullback square. For
example, let $D$ be an integral domain with field of fractions $K$, and let $E=\{e_{1},\ldots ,e_{r}\}\subset D$. Setting $T=K[X]$ and $C=(X-e_{1})\cdots
(X-e_{r})K[X]$, we have $B=T/C\cong \prod_{i=1}^{r}K$. Using $A=\prod_{i=1}^{r}D$ in the conductor square, we get 
$$R=\operatorname{Int}(E,D)=\{g\in K[X]\mid g(E)\subset D\}$$ the ring of integer-valued
polynomials on $D$ determined by the subset $E$. Observe that the rings $A$
and $B$ are not integral domains.  In fact, Chapman and Glaz have proposed
the following open question.

\begin{problem}[\protect{\cite[Problem 50]{CG}}]
\label{prob150614a}
Study the
ring and ideal-theoretic properties that transfer in a conductor square
where the conductor ideal is not maximal (or even prime) in the extension
ring.
\end{problem}

This paper is organized as follows. 
In Section~\ref{sec150509aa}, we
provide the relevant definitions and some background results for pullbacks
in which the extension $\iota _{2}\colon R\hookrightarrow T$ is flat.  In Section~\ref{sec150419a}, 
we show that, in a non-trivial pullback square, the conductor $C$ is \emph{never} finitely generated over $R$, and $\iota _{1}\colon A\hookrightarrow B$ is 
\emph{never} a faithfully flat extension of
rings, whenever $\iota _{2}\colon R\hookrightarrow T$ is flat.  In light of~\cite{boynton:rpb,gabelli:ccp}, this suggests that if $R$ is a
coherent-like ring defined by a regular conductor square of the type $\left(\square \right)$, then either $T$ is $R$-flat or $C$ (and $T$) is finitely
generated over $R$. 

The remaining sections, we assume that $T$ is $R$-flat in the regular
conductor square~$\left( \square \right)$.  In Section~\ref{sec150612a}, we assume that
the conductor $C$ is principal in $T$ and that a unitary type of condition
similar to~\cite{mcquillan} holds in order to find necessary and
sufficient conditions on the constituent rings $A$ and $T$ so that $R$ is a
finite conductor ring.  The proofs of these results all extend naturally to
the coherent ring case.  We conclude the section by showing that if $E$ is
finite, then the ring $\operatorname{Int}(E,D)$ is a finite conductor ring if and
only if $D$ is a finite conductor ring. We note that a similar result
holds for $\operatorname{Int}(E,D)$ in the coherent case.  In Sections~\ref{sec150612b} and~\ref{sec150612c},
we prove similar results for generalized GCD rings and quasi-coherent rings
in a flat regular conductor square. 

In contrast to the ideal-theoretic methods utilized in~\cite{gabelli:ccp}, our proofs rely slightly more on module theory. We make
frequent use of the results found in~\cite{glaz:ccr,glaz:fcp}.

\section{Background}\label{sec150509aa}

This section focuses on foundational notions and technical lemmas for the sequel.

\begin{rmk}\label{lem150509a}
Consider the regular conductor square $\left( \square \right)$. 
Then $T$ is a domain if and only if $R$ is a domain.
Indeed, since $R$ is a subring of $T$, one implication is easy (and does not use the regularity of $(\square)$.
The other implication follows from the fact that $T$ is an overring of $R$, by~\cite[Proposition 2.5(i)]{boynton:rpb}.
\end{rmk}

\begin{rmk}\label{rmk150615a}
It is straightforward to show that the following conditions on the  conductor square $\left( \square \right)$ are equivalent:
\begin{enumerate}[(i)]
\item $A=0$;
\item $C=R$; 
\item $T=R$;
\item $A=B$;
\item $T=C$; and
\item $B=0$.
\end{enumerate}
A \emph{trivial conductor square} is one that satisfies these equivalent conditions. 
We are only interested in non-trivial conductor squares.
\end{rmk}

\begin{defn}
A flat ring homomorphism $R\to T$ is a \emph{flat epimorphism} if the natural multiplication map $\mu\colon\Otimes TT\to T$ is 
bijective (i.e., injective).
\end{defn}

The next lemma 
shows that many of our results conform to the ``general format''.

\begin{lem}\label{lem150510a}
Consider the regular conductor square $\left( \square \right)$. 
If $T$ is flat over $R$, then $R\to T$ is a flat epimorphism.
\end{lem}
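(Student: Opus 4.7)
The plan is to verify the epimorphism criterion directly: show that for every $t\in T$, the element $t\otimes 1 - 1\otimes t$ vanishes in $\Otimes TT$. Once this is known, a standard manipulation shows that every simple tensor $s\otimes t$ equals $(st)\otimes 1$, so the unit map $T\to \Otimes TT$, $t\mapsto t\otimes 1$, is a two-sided inverse of $\mu$.

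The main tool will be the $T$-regular element $c\in C$ guaranteed by regularity of $(\square)$. Since $R\subseteq T$ is injective, $c$ is also $R$-regular. Because $T$ is $R$-flat, the tensor product $\Otimes TT$ is again $R$-flat, so multiplication by $c$ on $\Otimes TT$ is obtained by tensoring the injection $R\xrightarrow{c}R$ with the flat $R$-module $\Otimes TT$; consequently multiplication by $c$ is injective on $\Otimes TT$.

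Now fix $t\in T$. The key observation is that $ct\in C$, since $C$ is an ideal of $T$ and $c\in C$; but $C\subseteq R$, so $ct$ lies in $R$. Hence $(ct)\otimes 1 = 1\otimes(ct)$ in $\Otimes TT$, i.e.,
\[
c\bigl(t\otimes 1 - 1\otimes t\bigr) = 0.
\]
The torsion-freeness step above forces $t\otimes 1 = 1\otimes t$ for every $t\in T$. From this, for any simple tensor $s\otimes t\in\Otimes TT$ we compute $s\otimes t = s(1\otimes t) = s(t\otimes 1) = (st)\otimes 1$, so $\Otimes TT$ is generated by the image of $T\to\Otimes TT$, $t\mapsto t\otimes 1$. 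Composing with $\mu$ yields the identity on $T$, and the surjectivity just observed makes this a two-sided inverse, so $\mu$ is bijective.

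I do not anticipate any real obstacle: the argument is essentially forced once one isolates the three ingredients (existence of a $T$-regular $c\in C$, the inclusion $CT\subseteq C\subseteq R$, and flatness of $T$ over $R$ upgraded to flatness of $\Otimes TT$ over $R$). The only subtle point is ensuring that ``no $c$-torsion in $\Otimes TT$'' really follows from flatness rather than from any stronger hypothesis, which is why I phrased it as tensoring the injection $R\xrightarrow{c}R$ with the flat module $\Otimes TT$.
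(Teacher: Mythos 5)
Your proof is correct. It rests on exactly the same two ingredients as the paper's argument --- a $T$-regular element $c\in C$ together with the containments $cT\subseteq C\subseteq R$, and the fact that flatness lets you cancel $c$ on the tensor square --- but it packages them differently. The paper tensors the chain of injections $T\xrightarrow{c}cT\subseteq C\subseteq R$ with the flat module $T$ to embed $T\otimes_RT$ into $R\otimes_RT\cong T$, and reads off from a commutative square that the composite equals $c\circ\mu$, hence that $\mu$ is injective (which suffices, since $\mu$ is always surjective). You instead verify the standard diagonal criterion $t\otimes 1=1\otimes t$: you note $c(t\otimes 1-1\otimes t)=(ct)\otimes 1-1\otimes(ct)=0$ because $ct\in cT\subseteq C\subseteq R$, and then cancel $c$ using the flatness of $T\otimes_RT$ over $R$. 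This route costs you two extra (easy) observations --- that $c$ is $R$-regular because $R\subseteq T$, and that a tensor product of flat modules is flat --- but it buys an explicit two-sided inverse $t\mapsto t\otimes 1$ of $\mu$ and makes the epimorphism property visibly a statement about the diagonal. Both arguments are complete; the subtle point you flagged (that $c$-torsion-freeness of $T\otimes_RT$ follows from flatness alone) is handled correctly.
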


\begin{proof}
It suffices to show that the natural multiplication map $\mu\colon\Otimes TT\to T$ is injective.
Let $c\in C$ be $T$-regular.
The maps $T\xra ccT\xra\subseteq C\xra\subseteq R$ are injective.
Since $T$ is flat over $R$, the explains the monomorphisms in the top row of the following commutative diagram.
$$\xymatrix{
\Otimes TT\ \ar@{^(->}[r]^{\Otimes cT}\ar[d]_\mu
&\Otimes {(cT)}T\ \ar@{^(->}[r]
&\Otimes RT \ar[d]^{\cong} \\
T\ar[rr]^-c&&T.}$$
The unspecified vertical isomorphism is the natural one (tensor-cancellation).
It follows that $\mu$ is injective, as desired.
\end{proof}

\begin{disc}\label{lem150419a}
Consider the regular conductor square $\left( \square \right)$. 
Recall that an $A$-module $F$ is \emph{faithfully flat} if it is flat and for every $A$-module $N$ one has $N=0$ if and only if $F\otimes_AN=0$.
(See also~\cite[Theorem 7.2]{matsumura:crt}.)
Lemma~\ref{lem150510a} implies that if $T$ is faithfully flat over $R$, then $T=R$; see~\cite[p.~15]{glaz:ccr}.
\end{disc}

We  use  the following
conditions in Sections~\ref{sec150612a}--\ref{sec150612c}.
Note that  (U2)$\implies$(U1).

\begin{defn}\label{defn150522aa}
Consider a regular conductor square $\left( \square \right)$ and 
an ideal $I$ of $R$.
\begin{description}
\item[(FP)]
$T$ is flat over $R$ and $C$ is a principal ideal of $T$.
\item[(U1)]
There is an ideal $I'$ of $R$ isomorphic to $I$ such that $I'T=T$.
\item[(U2)]
There are elements $r,s\in R$ and an ideal $U\subseteq R$ such that $UT=T=rT$ and
$I\xra[\cong] r rI=sU\xla[\cong] sU$.\end{description}
In conditions (U1) and (U2), the ideals $I'$ and $U$ are ``unitary''.
\end{defn}

Next, we discuss  important cases where the conditions from Definition~\ref{defn150522aa} hold. 

\begin{disc}\label{disc150513a}
Assume that  $T$ is a PID and a localization of $R$.
Then conditions (FP), (U1), and (U2) from Definition~\ref{defn150522aa}  hold for all finitely generated ideals $I$ of $R$;
argue as in~\cite[Lemma 2.2]{mcquillan}.
For instance, let $K$ be a field with algebraic closure $\ol K$, and set $T=K[x]$. 
Let $\theta_1,\ldots,\theta_r\in \ol K$ with minimal polynomials $p_1,\ldots,p_r\in T$.
Assume that the $p_i$ are pairwise relatively prime.
For $i=1,\ldots,r$, let $A_i$ be a domain with field of fractions $K[\theta_i]$.

Set $A:=A_1\times\cdots\times A_r$ and $B:=K[\theta_1]\times\cdots\times K[\theta_r]$,
let $\eta_1\colon T\to B$ be the natural surjection $f\mapsto (f(\theta_1),\ldots,f(\theta_r))$, and let $\iota_1\colon A\into B$ be the natural inclusion.
Consider the  conductor square  determined by this data:
\begin{gather}
\begin{split}
\xymatrix{
R\ \ar@{^(->}[r]^-{\iota_2} \ar@{->>}[d]_-{\eta_2}
& K[X]\ar@{->>}[d]^-{\eta_1} \\ 
A_1\times\cdots\times A_r \ \ar@{^(->}[r]^-{\iota_1} &K[\theta_1]\times\cdots\times K[\theta_r].}
\end{split}
\tag{$\boxtimes$}
\end{gather}
Note that $K[X]$ is a localization of $R$.
Also, not that if $E$ is a finite subset of a domain $D$, then $\operatorname{Int}(E,D)$ is a special case of this construction.
\end{disc}

For the next result, recall that a ring $R$ is \emph{B\'ezout} if every finitely generated ideal of $R$ is principal. 

\begin{prop}\label{prop150614a}
In the regular conductor square $\left( \square \right)$,  assume that $T$ is flat over $R$.
Consider the following conditions.
\begin{enumerate}[\rm (i)]
\item \label{prop150614a12}
Every non-zero finitely generated ideal of $R$ satisfies the condition (U2).
\item \label{prop150614a22}
Every non-zero 2-generated ideal of $R$ satisfies the condition (U2).
\item \label{prop150614a11}
Every non-zero finitely generated ideal of $R$ satisfies the condition (U1).
\item \label{prop150614a21}
Every non-zero 2-generated ideal of $R$ satisfies the condition (U1).
\item \label{prop150614a3}
The ring $T$ is a B\'ezout domain. 
\end{enumerate}
The implications \eqref{prop150614a12}$\implies$\eqref{prop150614a22}$\implies$\eqref{prop150614a21}$\implies$\eqref{prop150614a3}
and \eqref{prop150614a12}$\implies$\eqref{prop150614a11}$\implies$\eqref{prop150614a21}$\implies$\eqref{prop150614a3} always hold.
If $T$ is  a localization of $R$, then the conditions~\eqref{prop150614a12}--\eqref{prop150614a3} are equivalent.
\end{prop}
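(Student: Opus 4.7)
The plan naturally splits into three parts.  First, the implications (i)$\implies$(ii), (i)$\implies$(iii), (ii)$\implies$(iv), and (iii)$\implies$(iv) are immediate: every $2$-generated ideal is finitely generated, and (U2)$\implies$(U1) as noted just before Definition~\ref{defn150522aa}.

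The central step is (iv)$\implies$(v), which I would handle in two stages.  \emph{Stage A (domain).}  For any $0\neq a\in R$, apply (U1) to the principal, hence $2$-generated, ideal $(a)\cong_R R/\ann_R(a)$: there is an ideal $I'\subseteq R$ with $I'\cong_R (a)$ and $I'T=T$.  Since $T$ is flat over $R$, tensoring the inclusion $I'\into R$ with $T$ gives $I'\otimes_R T\cong I'T=T$; on the other hand $I'\otimes_R T\cong T/\ann_R(a)T$, so $\ann_R(a)T=0$.  As $\iota_2\colon R\into T$ is injective, this forces $\ann_R(a)=0$, so $R$, hence $T$ by Remark~\ref{lem150509a}, is a domain.  \emph{Stage B (B\'ezout).}  For nonzero $x,y\in T$, choose a $T$-regular $c\in C$; then $cx,cy\in cT\subseteq R$ generate a nonzero $2$-generated ideal $I_R$ of $R$, and (U1) together with flatness gives a $T$-module isomorphism $c(x,y)T=I_RT\cong T$.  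This principal ideal is generated by a $T$-regular element of $cT$, say $ce$ with $e\in T$, and cancelling the $T$-regular $c$ shows $(x,y)T=eT$ is principal.  A standard induction handles ideals with more than two generators.

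For the converse under the extra hypothesis that $T$ is a localization $S^{-1}R$ of $R$, I would follow the pattern of \cite[Lemma 2.2]{mcquillan}.  Given a nonzero finitely generated $I=(a_1,\ldots,a_n)\subseteq R$, B\'ezout-ness provides $IT=\alpha T$ with $\alpha\in T$; multiplying $\alpha$ by a unit of $T$ coming from $S$ makes $\alpha=u\in R$, and $u\neq 0$ since $IT\neq 0$.  The relations $u\in IT$ and $a_i\in uT$ inside $S^{-1}R$ yield, after clearing denominators, elements $s\in S$ and $r_i'\in R$ with $su=\sum r_i'a_i\in I$, and $\sigma\in S$ and $t_i'\in R$ with $\sigma a_i=t_i'u$.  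Set $U=(t_1',\ldots,t_n')\subseteq R$; then $\sigma I=uU$.  Substituting one set of relations into the other gives $u\sigma s=u\sum r_i't_i'$, so cancelling the nonzero $u$ in the domain $R$ yields $\sigma s=\sum r_i't_i'\in U$.  Hence $UT\ni\sigma s$, a unit of $T$, so $UT=T$; plainly $\sigma T=T$.  Since $R$ is a domain, multiplication by the nonzero $\sigma$ and $u$ give $R$-module isomorphisms $I\xra[\cong]{\sigma}\sigma I=uU\xla[\cong]{u} U$, verifying (U2) with $r=\sigma$ and $s=u$ and closing the equivalence cycle.

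The main obstacle is (v)$\implies$(i) in the localization case: one must extract both the ideal $U$ and the scalars $r,s$ realising (U2) from nothing more than the principality of $IT$ and the localization structure, and the whole argument rests on careful denominator tracking.  Stage A, by contrast, is conceptually the cleanest step: once one sees that $I'T=T$ plus flatness forces $\ann_R(a)T=0$ inside $T$, injectivity of $\iota_2$ finishes the job.
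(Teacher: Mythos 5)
Your proposal is correct and follows essentially the same route as the paper: the same trivial implications, the same key computation $IT\cong T\otimes_RI\cong T\otimes_RI'\cong I'T=T$ after clearing denominators by a $T$-regular $c\in C$ for (iv)$\implies$(v), and the same McQuillan-style denominator-tracking for (v)$\implies$(i) when $T=S^{-1}R$. The only cosmetic differences are that you establish that $R$ (hence $T$) is a domain separately via annihilators, where the paper extracts it from the case $t=u$ of the same computation, and that you verify $UT=T$ via the explicit relation $\sigma s\in U$ rather than leaving it as "straightforward."
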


\begin{proof}
The implications \eqref{prop150614a12}$\implies$\eqref{prop150614a22}$\implies$\eqref{prop150614a21}
and \eqref{prop150614a12}$\implies$\eqref{prop150614a11}$\implies$\eqref{prop150614a21} are routine.

\eqref{prop150614a21}$\implies$\eqref{prop150614a3}.
Assume that every non-zero 2-generated ideal of $R$ satisfies the condition (U1).
To show that $T$ is B\'ezout, it suffices to show that every 2-generated ideal of $T$ is principal; that is is sufficient follows from an induction argument
on the number of generators of a given finitely generated ideal. 
Let $t,u\in T$ and consider the ideal $J:=(t,u)T$. 
Let $c\in C$ be $T$-regular, noting that we have $ct,cu\in R$. 
It follows that $J\cong cJ=(ct,cu)T$, so we may replace $J$ with $cJ$ to assume without loss of generality that $t,u\in R$.
Our assumption implies that the ideal $I$ is isomorphic to an ideal $I'$ of $R$ such that $I'T=T$.
Since $T$ is flat over $R$, it follows that 
$$J=IT\cong \Otimes TI\cong\Otimes T{I'}\cong I'T=T$$
so $J$ is principal.
Note that this argument also shows that $T$ is a domain. Indeed, if $0\neq t=u$, then the argument above shows that $tT\cong T$, so $t$ is a non-zero-divisor on $T$.

It remains to assume that $T$ is a localization of $R$,  and to prove the implication~\eqref{prop150614a3}$\implies$\eqref{prop150614a12}.
Assume that $T=S^{-1}R$ is a B\'ezout domain, and let $I$ be a non-zero finitely generated ideal of $R$, say $I=(r_1,\ldots,r_n)R$.
We need to show that $I$ satisfies condition (U2)  from Definition~\ref{defn150522aa}. 
The extension $IT$ is finitely generated over $T$, hence it is principal, say $IT=(r_1,\ldots,r_n)T=tT$ with $t\in T$. 
The condition $I\neq 0$ implies that $tT=IT\neq 0$, since $R\subseteq T$, so we have $t\neq 0$.
Write $t=r/s$ for some $r\in R\ssm\{0\}$ and $s\in S$. 
Since $s$ is a unit in $T$, it follows that $IT=rT$. 
For $i=1,\ldots,n$, write $r_i=rt_i$ for some $t_i\in T=S^{-1}R$, and write $t_i=\rho_i/\sigma_i$ for some $\rho_i\in R$ and $\sigma_i\in S$.
Set $\sigma=\sigma_1\cdots\sigma_n$ and $\sigma_i'=\sigma/\sigma_i$, and note that $\sigma\in S\subseteq R$ satisfies $\sigma T=T$. 
Also, we have $\rho_i,\sigma_i'\in R$, so we set $U:=(\rho_1\sigma_i',\ldots,\rho_n\sigma_n')R$. 
It is straightforward to show that we have $I\xra[\cong]\sigma \sigma I=rU\xla[\cong]{r} U$ and $UT=T$, so condition (U1) is satisfied.
\end{proof}

The next result shows that many of our results are trivially true when the conductor square $\left(\square\right)$ is trivial, even if it is not regular.

\begin{prop}\label{prop150615a}
Consider the trivial conductor square $\left(\square\right)$. If every non-zero 2-generated ideal $I$ of $R$ satisfies condition (U1) from Definition~\ref{defn150522aa},
then $R=T$ is a B\'ezout domain.
\end{prop}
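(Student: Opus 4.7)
The plan is to exploit the triviality of the square to show the hypothesis is essentially vacuous except for forcing a strong form of principality. By Remark~\ref{rmk150615a}, the trivial conductor square satisfies $R=T$. Under this identification, condition (U1) for a non-zero ideal $I$ of $R$ requires an ideal $I'$ of $R$ with $I\cong I'$ and $I'=I'T=T=R$, forcing $I'=R$. So the hypothesis amounts to: every non-zero 2-generated ideal of $R$ is isomorphic to $R$ as an $R$-module.

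The first step is to show $R$ is a domain. For any $x\in R$ with $x\neq 0$, the principal ideal $xR$ is a non-zero 2-generated ideal of $R$ (it is generated by $\{x,0\}$), so the hypothesis yields an $R$-module isomorphism $\phi\colon R\xra{\cong}xR$. Setting $y:=\phi(1)$, we have $xR=yR$ and $y$ is $R$-regular because $\phi$ is injective. Writing $y=xb$ and $x=ya$ for some $a,b\in R$ gives $y(1-ab)=0$, hence $ab=1$; thus $a$ is a unit and $x=ya$ is also $R$-regular. It follows that every non-zero element of $R$ is a non-zero-divisor, so $R$ is a domain.

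The second step is to deduce the B\'ezout property. Every non-zero 2-generated ideal $I$ of $R$ is isomorphic to $R$, hence principal (generated by the image of $1$ under the isomorphism). An induction on the number of generators, exactly as in the implication \eqref{prop150614a21}$\implies$\eqref{prop150614a3} of the proof of Proposition~\ref{prop150614a}, then shows every non-zero finitely generated ideal of $R$ is principal, so $R$ is a B\'ezout domain.

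There is no serious obstacle; the only point requiring care is the deduction of $R$-regularity of $x$ from the isomorphism $R\cong xR$, after which the domain property and the B\'ezout property are both straightforward.
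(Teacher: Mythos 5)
Your proof is correct and follows essentially the same route as the paper's: use triviality to get $R=T$, observe that (U1) then forces every non-zero 2-generated ideal to be isomorphic to $R$ (hence principal), induct on generators for the B\'ezout property, and deduce the domain property from $xR\cong R$. You actually supply slightly more detail than the paper does on why $xR\cong R$ forces $x$ itself to be a non-zero-divisor (the two-generators-of-a-cyclic-module argument), which is a worthwhile addition.
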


\begin{proof}
The condition $R=T$ follows from the triviality assumption.

We now show that $R$ is a B\'ezout ring.
Let $I$ be a non-zero finitely generated ideal. By assumption, there is an ideal $I'$ of $R$ such that $I\cong I'$ and $I'T=T$. 
The condition $R=T$ implies that $I\cong I'=I'R=I'T=T=R$, so $I$ is principal, as desired. 

This also shows that $R$ is a domain, since any principal ideal $I=rR$ satisfies $rR=I\cong R$, so $r$ is a non-zero-divisor.
\end{proof}

The next two lemmas document some technical facts for use in the sequel.

\begin{lem}\label{lem150517a}
Consider the regular conductor square $\left( \square \right)$. Assume that $T$ is flat as an $R$-module and that $C$ is a principal ideal of $T$. 
Let $I$ be an ideal of $R$.
\begin{enumerate}[\rm(a)]
\item\label{lem150517a1}
If $IT=T$, then $I\supseteq IC=C$ and $IA=I/C=I/CI\cong\Otimes AI$.
\item\label{lem150517a2}
The natural map $\mu\colon\Otimes CI\to I$ is injective and $\Tor 1AI=0$.
\item\label{lem150517a3}
If $IT$ is a principal regular ideal of $T$, e.g., if $IT=T$, 
then for all $i$ we have
$$\Ext iIC\cong\Ext iIT
\cong
\begin{cases}T&\text{if $i= 0$} \\ 0 & \text{if  $i\neq 0$.}\end{cases}
$$
\end{enumerate}
\end{lem}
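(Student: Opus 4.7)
The plan is to leverage the fact that, writing $C = cT$ for some $c \in T$, the element $c$ is automatically $T$-regular: if $c' \in C$ is $T$-regular with $c' = cu$, then any $x \in T$ satisfying $cx = 0$ also satisfies $c'x = 0$, forcing $x = 0$. Consequently, multiplication by $c$ gives an $R$-linear (indeed $T$-linear) isomorphism $T \xra{\cong} C$, so in particular $C$ is $R$-flat. This isomorphism and the flatness of $T$ are used throughout.

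For part~(a), the hypothesis $IT = T$ gives $IC = I(cT) = c(IT) = cT = C$, whence $C = IC \subseteq IR = I$. Therefore $IA = (I + C)/C = I/C = I/IC$, and the identification $I/IC = \Otimes[R]{A}{I}$ is the standard right-exact computation applied to $0 \to C \to R \to A \to 0$.

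For part~(b), tensoring $0 \to C \to R \to A \to 0$ with $I$ produces an exact sequence $0 \to \Tor[R]{1}{A}{I} \to \Otimes[R]{C}{I} \xra{\mu} I$, so the two claims are equivalent: $\mu$ is injective if and only if $\Tor[R]{1}{A}{I} = 0$. To prove $\mu$ injective, I use that the isomorphism $\phi \colon T \to C$, $x \mapsto cx$, induces an isomorphism $\Otimes[R]{T}{I} \xra{\cong} \Otimes[R]{C}{I}$, while flatness of $T$ identifies $\Otimes[R]{T}{I}$ with the submodule $IT \subseteq T$. Chasing these identifications, $\mu$ becomes the map $IT \to I$ given by multiplication by $c$ (which lands in $I$ since $cT = C \subseteq R$), and this map is injective because $c$ is $T$-regular.

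For part~(c), the isomorphism $C \cong T$ from the setup immediately yields $\Ext[R]{i}{I}{C} \cong \Ext[R]{i}{I}{T}$, so it suffices to compute the latter. Since $R \to T$ is a flat epimorphism by Lemma~\ref{lem150510a}, standard base change applies: take a projective resolution $P_\bullet \to I$ over $R$, so that $\Otimes[R]{P_\bullet}{T} \to \Otimes[R]{I}{T} \cong IT$ is a projective resolution over $T$, and the adjunction $\Hom[T]{\Otimes[R]{P_j}{T}}{T} \cong \Hom[R]{P_j}{T}$ yields $\Ext[R]{i}{I}{T} \cong \Ext[T]{i}{IT}{T}$. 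Finally, the hypothesis that $IT$ is principal and regular forces $IT \cong T$ as $T$-modules, so this last Ext equals $T$ when $i = 0$ and vanishes otherwise. The most delicate step is the base-change identification in part~(c), where the flat-epimorphism property is essential; the remaining work reduces to routine bookkeeping with the isomorphism $C \cong T$.
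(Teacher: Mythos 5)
Your proof is correct and follows essentially the same route as the paper's: the same computation $IC=c(IT)=C$ for (a), the same identification of $\mu$ with multiplication by $c$ on $IT\cong\Otimes TI$ for (b), and the same resolution-plus-adjunction base change $\Ext iIT\cong\Ext[T]i{IT}{T}$ for (c). The only quibble is that in (c) you call the flat-epimorphism property ``essential,'' whereas plain flatness of $T$ over $R$ already makes $\Otimes TP$ a projective resolution of $IT$ and gives the adjunction isomorphism, which is all the paper uses; on the other hand, your explicit check that the generator $c$ of $C$ is automatically $T$-regular is a worthwhile detail the paper leaves implicit.
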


\begin{proof}
\eqref{lem150517a1}
If $C/IC\cong T/IT=0$, then $C=IC\subseteq I$.
Alternately, if $T=IT$, then $C=TC=ITC=IC$.

\eqref{lem150517a2}
By assumption, we have $C=cT\cong T$ for some element $c\in C$.
Consider the following commutative diagram:
$$\xymatrix{
\Otimes TI\ar[r]^-{\cong}\ar[d]_\cong^{c\cdot} &TI_{}\ar@{^(->}[d]^{c\cdot}\\
\Otimes CI\ar[r]^-\mu&I
}$$
The horizontal maps are the natural ones: $t\otimes k\mapsto tk$.
It follows that $\mu$ is injective.

Now, consider the exact sequence
$$0\to C\to R\to A\to 0$$
and the induced long exact sequence in $\Tor {}-I$:
$$0\to\Tor 1AI\to\Otimes CI\xra{\mu} I\to I/CI\to 0.$$
Since $\mu$ is injective, it follows that $\Tor 1AI=0$, as desired.

\eqref{lem150517a3}
Since $C=cT\cong T$, the isomorphism $\Ext iIC\cong\Ext iIT$ is automatic. 
For the other isomorphism, let $P$ be a projective resolution of $I$ over $R$. 
The isomorphisms in the next sequence are Hom-tensor adjointness and Hom-cancellation:
\begin{align}
\Hom[T]{\Otimes TP}{T}
&\cong\Hom{P}{\Hom[T] TT}
\cong\Hom PT.
\label{eq150531a}
\end{align}
Since $T$ is flat over $R$, the complex $\Otimes TP$ is a projective resolution of $\Otimes TI\cong IT\cong T$;
the second isomorphism is from the assumption that $IT$ is principal and regular over $T$.
This explains the third isomorphism in the next sequence:
\begin{align*}
\Ext iIT
&\cong\HH^i(\Hom PT)\\
&\cong\HH^i(\Hom[T]{\Otimes TP}{T})\\
&\cong\Ext[T] iTT\\
&\cong
\begin{cases}T&\text{if $i= 0$} \\ 0 & \text{if  $i\neq 0$.}\end{cases}
\end{align*}
The first isomorphism is by definition,  the second one is from~\eqref{eq150531a}, and the last one is standard.
\end{proof}

\begin{lem}\label{lem150602a}
Consider the regular conductor square $\left( \square \right)$. Assume that $T$ is flat as an $R$-module and that $C$ is a principal ideal of $T$. 
Let $I$ be an ideal of $R$ such that  $IT=T$.
Then the natural maps 
$$C\to C\cdot\Hom IR\to C\cdot\Hom IT\to\Hom IC$$
are isomorphisms, and one has
$$\Hom IR/[C\cdot\Hom IR]\cong\Hom[A]{IA}{A}.$$ 
\end{lem}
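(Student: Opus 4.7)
The plan is to reduce everything to explicit identifications inside $\Hom IT$, which by the argument in the proof of Lemma~\ref{lem150517a}(c) is canonically isomorphic to $T$ via $t\mapsto\mu_t$, where $\mu_t(x)=tx$. Indeed, $T\to\Hom IT$, $t\mapsto\mu_t$, is easily injective, and a direct trace through the adjunction chain $\Hom IT\cong\Hom[T]{\Otimes TI}{T}\cong\Hom[T] TT\cong T$ from loc.\,cit.\ shows that its composition with that isomorphism is the identity on $T$, so $t\mapsto\mu_t$ realizes the inverse. Once this is in hand, the proof is a matter of identifying each of the four terms in the composition with $C$.

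First, the image of $\Hom IC\hookrightarrow\Hom IT$ (induced by $C\into T$) consists of the $\mu_t$ with $tI\subseteq C$; since $IT=T$ and $C$ is a $T$-ideal, $tI\subseteq C$ forces $tT\subseteq C$, i.e., $t\in C$. So $\Hom IC$ corresponds to $C\subseteq T$. Next, $C\cdot\Hom IT$ corresponds to $CT=C$, because a generator $c\mu_t$ equals $\mu_{ct}$ with $ct\in C$. Under these identifications, the third map $C\cdot\Hom IT\to\Hom IC$ becomes the identity on $C$. Now view $\Hom IR\hookrightarrow\Hom IT$ via $R\into T$: a generator $c\phi$ of $C\cdot\Hom IR$ satisfies $(c\phi)(x)=c\phi(x)\in C$ for all $x\in I$, so corresponds to $\mu_{c'}$ for some $c'\in C$, while conversely every $\mu_c$ (with $c\in C$) equals $c\cdot\iota_I\in C\cdot\Hom IR$, where $\iota_I\colon I\into R$ is the inclusion. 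Thus $C\cdot\Hom IR$ also corresponds to $C$, and the first two maps are likewise the identity on $C$. All three maps are therefore isomorphisms.

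For the second assertion, I apply $\Hom I-$ to $0\to C\to R\to A\to 0$. By Lemma~\ref{lem150517a}(c), $\Ext 1IC=0$, which yields the short exact sequence
$$0\to\Hom IC\to\Hom IR\to\Hom IA\to 0.$$
By the first part, the image of $\Hom IC$ in $\Hom IR$ is exactly $C\cdot\Hom IR$, so $\Hom IR/[C\cdot\Hom IR]\cong\Hom IA$. Since $C$ annihilates $A$, any $R$-linear map $I\to A$ factors uniquely through $I/CI$; and Lemma~\ref{lem150517a}(a) identifies $I/CI$ with $IA$. Hence $\Hom IA\cong\Hom[A]{IA}{A}$, as required.

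The main obstacle is bookkeeping: one must keep the various copies of $C$ straight under the identification $\Hom IT\cong T$, and take care not to confuse the canonical inclusion $\Hom IC\hookrightarrow\Hom IT$ (which becomes the plain inclusion $C\hookrightarrow T$, \emph{not} an isomorphism) with the separate isomorphism $\Hom IC\cong C$ arising from $IT=T$ and the $T$-ideal property of $C$. Once these identifications are pinned down, each map check is straightforward.
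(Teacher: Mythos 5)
Your proof is correct and follows essentially the same route as the paper's: both rest on the multiplication isomorphism $T\cong\Hom IT$ obtained from flatness, $IT=T$, and Hom-tensor adjointness, and both derive the final isomorphism from the long exact sequence attached to $0\to C\to R\to A\to 0$ together with $\Ext 1IC=0$ from Lemma~\ref{lem150517a}. Your explicit identification of each of the four modules with $C$ inside $\Hom IT\cong T$ is just a concrete repackaging of the paper's injectivity-plus-surjectivity-of-the-composite argument (and your observation that $tI\subseteq C$ forces $t\in C$ makes the surjectivity of the last map slightly more transparent than in the paper).
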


\begin{proof}
Assume that $IT=T$. We first describe the ``natural maps'' from the statement.
For each $r\in R$, let $\lambda_r\colon I\to R$ be given by $\lambda_r(i)=ri$.
In particular, the map $\lambda_1\colon I\to R$ is the inclusion of $I$ in $R$.
Moreover, for all $r,s\in R$ we have $r\lambda_s=\lambda_{rs}$ and so $\lambda_r=r\lambda_1$.
In particular, the map $\Lambda\colon C\to C\cdot\Hom IR$ given by $x\mapsto x\lambda_1=\lambda_x$ is a well-defined $R$-module homomorphism. 
Furthermore, this map is a monomorphism, as follows. Suppose that $x\in\Ker(\Lambda)$, that is, that $\lambda_x=0$. 
By definition, it follows that $xI=0$, and this implies that $0=xIT=xT$. Since $T$ contains $1$, it follows that $x=0$.

Consider the inclusion $R\xra[\subseteq]{\epsilon} T$, and apply the left-exact functor $\Hom I-$ to obtain the monomorphism
$\Hom IR\xra{\epsilon_*}\Hom IT$. Note that $\epsilon_*$ simply enlarges the codomain of a homomorphism $I\to R$ from $R$ to the larger ring $T$. 
The natural map 
$C\cdot\Hom IR\xra\Phi C\cdot\Hom IT$ is induced by $\epsilon_*$:
for elements $c_j\in C$ and $f_j\in\Hom iR$, we have
$\Phi(\sum_jc_jf_j):=\sum_jc_j\epsilon_*(f_j)=\sum_jc_j(\epsilon\circ f_j)$.
Note that this gives a commutative diagram
$$\xymatrix{
C\cdot\Hom IR
\ar[r]^{\Phi}\ar@{^(->}[d]
&C\cdot\Hom IT
\ar@{^(->}[d] \\
\Hom IR\ 
\ar@{^(->}[r]^{\epsilon_*}
&\Hom IT}$$
where the vertical monomorphism are the subset inclusions.
Since $\epsilon_*$ is injective, the commutative diagram shows that $\Phi$ is also injective.

Next, consider elements $c_j\in C$ and $g_j\in\Hom IT$.
Note that the function $g:=\sum_jc_jg_j$ satisfies $g(i)=\sum_jc_jg_j(i)\in CT=C$,
that is, we have $\im(g)\subseteq C$;
let $g'\colon I\to C$ denote the homomorphism obtained by restricting the codomain of $g$ from $T$ to $C$.
We define $C\cdot\Hom IT\xra\Psi\Hom IC$
by the formula $\Psi(g):=g'$.
Note that if $\rho\colon C\to T$ denotes the inclusion map and $\rho_*\colon\Hom IC\to\Hom IT$ is the induced monomorphism,
then our definition implies that we have $\rho_*(\Psi(g))=\rho_*(g')=\rho\circ g'=g$.
Also, note that $g$ and $g'$ use the same rule of assignment: for all $i\in I$, we have $g(i)=g'(i)$, by definition.
It follows that $g=0$ if and only if $g'=0$, that is, if and only if $\Psi(g)=0$; thus, $\Psi$ is injective. 

Since the maps $\Lambda$, $\Phi$, and $\Psi$ are monomorphisms, to prove that they are isomorphisms, it suffices to show that the composition
$\Psi\circ\Phi\circ\Lambda$ is surjective. 
It is straightforward to show that, for all $x\in C$ and all $i\in I$, we have $\Psi(\Phi(\Lambda(x)))(i)=xi$. 

On the other hand, we consider the following sequence:
\begin{align*}
T
&\cong\Hom[T] TT \\
&=\Hom[T]{IT}T \\
&\cong\Hom[T]{\Otimes TI}T \\
&\cong\Hom{I}{\Hom[T]TT} \\
&\cong\Hom IT.
\end{align*}
The first and last steps are Hom-cancellation.
The second step is from the assumption $IT=T$.
The third step is by the flatness of $T$ over $R$, which implies that the map $\Otimes TI\to IT$, given by $t\otimes i\mapsto it$, is an isomorphism.
The fourth step is Hom-tensor adjointness.
It is routine to show that the composition of these isomorphisms $\alpha\colon T\to\Hom IT$ is given by the formula $\alpha(t)(i)=it$. 

Multiplying by $C$ yields an isomorphism $C=CT\xra[\cong]{\alpha'}C\cdot\Hom IT$ which is given by
$\alpha'(x)=\alpha'(x\cdot 1)=x\alpha(1)=\alpha(x)$. In other words, we have
$\alpha'(x)(i)=xi=\Psi(\Phi(\Lambda(x)))(i)$, so $\alpha'(x)=\Psi(\Phi(\Lambda(x)))$ and hence $\alpha'=\Psi\circ\Phi\circ\Lambda$.
Since $\alpha'$ is an isomorphism, it follows that $\Psi\circ\Phi\circ\Lambda$ is surjective, as claimed. 
This shows that $\Lambda$, $\Phi$, and $\Psi$ are isomorphisms.

To complete the proof, consider the exact sequence 
$$0\to C\to R\to A\to 0.$$
Since $\Ext 1IC=1$ by Lemma~\ref{lem150517a}\eqref{lem150517a3},
the induced long exact sequence in $\Ext{}I-$ begins as follows:
$$0\to \Hom IC\to\Hom IR\to\Hom IA\to 0.$$
By what we have already shown, $\Hom IC$ is naturally identified with the submodule $C\cdot\Hom IR\subseteq\Hom IR$, so this sequence has the form
$$0\to C\cdot\Hom IR\xra\subseteq\Hom IR\to\Hom IA\to 0.$$
Thus, we have the first isomorphism in the next sequence:
\begin{align*}
\Hom IR/[C\cdot\Hom IR]
&\cong \Hom IA \\
&\cong \Hom I{\Hom[A]AA} \\
&\cong\Hom[A]{\Otimes AI}A \\
&\cong\Hom[A]{IA}A.
\end{align*}
The second and third isomorphisms are cancellation and adjointness.
And the fourth isomorphism is from
Lemma~\ref{lem150517a}\eqref{lem150517a1}.
This sequence of isomorphisms completes the proof.
\end{proof}

\section{$C$ is not finitely generated over $R$}\label{sec150419a}

The point of this section is to prove Theorem~\ref{prop150419a}, which says that, given a non-trivial, flat, regular conductor square $\left( \square \right)$,
the conductor ideal $C$ is never finitely generated over $R$, and $B$ is never faithfully flat over $A$.
Note, however, that $C$ can be finitely generated over $T$ (indeed, it can be principal), and $B$ is always flat over $A$ in this setting.

\begin{lem}\label{lem150316a}
Consider the regular conductor square $\left( \square \right)$, and assume that $C$ is finitely generated over $R$ (e.g., $C$ 
satisfies $(\dagger)$ over $R$). 
\begin{enumerate}[\rm(a)]
\item\label{lem150316a1}
For each prime ideal $\fp\subset A$, there is a prime ideal $\fq\subset B$ lying over  $\fp$. 
\item\label{lem150316a2}
For each maximal ideal $\fm\subset A$, there is a maximal ideal $\fn\subset B$ lying over $\m$. 
\item\label{lem150316a3}
If $T$ is flat as an $R$-module, then $B$ is faithfully flat as an $A$-module.
\end{enumerate}
\end{lem}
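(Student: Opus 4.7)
The plan is to exploit the hypothesis that $C$ is finitely generated over $R$, together with the existence of a $T$-regular element in $C$, to deduce that the extension $R \subseteq T$ is integral. Parts (a) and (b) will then follow from lying-over for injective integral extensions, and part (c) will combine (b) with the flatness assumption via a standard flat-base-change identification.

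For the integrality step, I would pick a $T$-regular $c_1 \in C$ and enlarge (if needed) to obtain $R$-module generators $c_1, \ldots, c_n$ of $C$. Because $C$ is already an ideal of $T$, these same elements also generate $C$ as a $T$-ideal. Given $t \in T$, the inclusion $t c_i \in C$ produces coefficients $r_{ij} \in R$ with $t c_i = \sum_{j} r_{ij} c_j$, so the column vector $(c_1, \ldots, c_n)^{\top}$ is annihilated by $tI - (r_{ij}) \in M_n(T)$. Multiplying by the adjugate yields $\det(tI - (r_{ij})) \cdot c_1 = 0$, and $T$-regularity of $c_1$ forces the characteristic polynomial to vanish at $t$. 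Since this polynomial is monic with coefficients in $R$, we conclude that $t$ is integral over $R$, and hence $T$ is integral over $R$.

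Reducing modulo $C$ then shows $\iota_1 \colon A \hookrightarrow B$ is an injective integral extension, which gives (a) at once, and gives (b) since any prime of $B$ lying over a maximal $\fm \subset A$ must itself be maximal (the map $A/\fm \hookrightarrow B/\fn$ is an integral extension of domains whose source is a field, hence whose target is also a field). For (c), the identity $CT = C$ yields a natural identification $B \cong T \otimes_R A$ of $A$-algebras, so that $B \otimes_A M \cong T \otimes_R M$ for every $A$-module $M$. Flatness of $T$ over $R$ then transfers to flatness of $B$ over $A$ (any $A$-module injection is an $R$-module injection), and (b) upgrades this to faithful flatness via the standard maximal-ideal criterion $B \otimes_A (A/\fm) = B/\fm B \neq 0$, which holds because $\fm B \subseteq \fn \subsetneq B$ for the prime $\fn$ furnished by (b).

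The main obstacle is the integrality step: arranging a $T$-regular element among the generators of $C$ is essential, because without it one cannot cancel the determinant to extract a monic polynomial relation for an arbitrary $t \in T$. Once $R \subseteq T$ is known to be integral, the remainder of the proof reduces to routine applications of lying-over and flat base change.
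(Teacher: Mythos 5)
Your proof is correct, but it takes a genuinely different route from the paper's. You establish the stronger intermediate fact that $R\subseteq T$ is an \emph{integral} extension: since $C$ is a finitely generated $R$-module that is stable under multiplication by any $t\in T$ and faithful as a $T$-module (it contains a $T$-regular element), the determinant/adjugate trick produces a monic polynomial over $R$ killed by $t$. From integrality of $A\hookrightarrow B$ you then get (a) by lying-over and (b) by the standard fact that a prime of an integral extension lying over a maximal ideal is maximal. The paper instead never mentions integrality: for (a) it localizes the conductor square at the contraction $P$ of $\fp$, observes $B_P\neq 0$, and invokes Proposition 2.5(iii) of the authors' earlier work \cite{boynton:rpb} (where the finite generation of $C$ enters) to see that the contraction of a maximal ideal of $B_P$ is maximal in $A_P$, hence equals $\fp_\fp$; part (b) is then deduced from (a) by passing to a maximal ideal containing $\fq$ and squeezing. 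Your argument is more self-contained and elementary, avoiding the external citations, and it exposes the structural reason behind lying-over here; the paper's argument is shorter on the page because it leans on previously established machinery for regular conductor squares. For (c) the two proofs coincide in substance: flatness of $B=T\otimes_RA$ over $A$ by base change, upgraded to faithful flatness via (b) and the criterion of \cite[Theorem 7.2]{matsumura:crt}. One small point worth making explicit in your write-up is that adjoining the $T$-regular element $c_1$ to a finite $R$-generating set of $C$ is harmless, which you do note; with that, there is no gap.
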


\begin{proof}
\eqref{lem150316a1}
Let $P\subset R$ be the contraction of $\fp$ along the surjection $R\onto A$.
The localized square
\begin{gather}
\begin{split}
\xymatrix{
R_{P}\ \ar@{^(->}[r] \ar@{->>}[d]
& T_{P}\ar@{->>}[d] \\ 
A_{P} \ \ar@{^(->}[r] &B_{P}}
\end{split}
\tag{$\square_{P}$}
\end{gather}
is a regular conductor square with conductor $C_{P}$; see~\cite[Notation 2.4]{boynton:rpb}.
In particular, the ring $B_{P}$ is non-zero, so it has a maximal ideal, necessarily of the form $\fq_{P}$ by the prime correspondence under localization.
The contraction in $A_{P}=A_{\fp}$ of $\fq_{P}=\fq_{\fp}$ must be maximal in $A_{\fp}$ by~\cite[Proposition 2.5(iii)]{boynton:rpb};
this is where we use the finite generation of $C$.
Thus, the contraction in $A_{\fp}$ of $\fq_{\fp}$ must be $\fp_{\fp}$.
Another application of the prime correspondence implies that the contraction of $\fq$ in $A$ is $\fp$, as desired.

\eqref{lem150316a2}
Part~\eqref{lem150316a1} implies that there is a prime ideal $\fq$ of $B$ that contracts to $\fm$ in $A$.
If $\fq$ is not maximal, let $\fn\subset B$ be a maximal ideal containing $\fq$.
Then the contraction of $\fn$ in $A$ must be a prime ideal containing $\fm$, that is, the contraction must be $\fm$.

\eqref{lem150316a3}
Since $T$ is flat over $R$, we know that $B$ is flat over $A$. 
Thus, part~\eqref{lem150316a2} implies that $B$ is faithfully flat over $A$; see~\cite[Theorem 7.2]{matsumura:crt}.
\end{proof}

A special case of the next result is in~\cite[Lemma 4.1]{gabelli:ccp}; see also~\cite[Lemma 1]{brewerandrutter}.

\begin{thm}\label{prop150419a}
Consider the non-trivial regular conductor square $\left( \square \right)$. Assume that 
$T$ is flat as an $R$-module. 
Then $B$ is not faithfully flat over $A$, and $C$ is not finitely generated over $R$
\end{thm}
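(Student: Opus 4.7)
The strategy is to unify the two assertions through Lemma~\ref{lem150316a}\eqref{lem150316a3}, which tells us that if $T$ is $R$-flat and $C$ is finitely generated over $R$, then $B$ is faithfully flat over $A$. Consequently, once I show that $B$ is not faithfully flat over $A$, the failure of finite generation of $C$ over $R$ follows by contraposition, so both conclusions reduce to proving that $B$ is not faithfully flat over $A$.

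To prove $B$ is not faithfully flat over $A$, I argue that $\iota_1\colon A\to B$ is a flat epimorphism and then invoke the key observation underlying Remark~\ref{lem150419a}: any faithfully flat epimorphism of rings is an isomorphism. Granting that $\iota_1$ is a flat epimorphism, if $B$ were faithfully flat over $A$ we would obtain $A=B$, which is one of the equivalent triviality conditions in Remark~\ref{rmk150615a}, contradicting the non-triviality hypothesis on $\left(\square\right)$.

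It remains to verify that $\iota_1$ is a flat epimorphism. Flatness of $B$ over $A$ is immediate, since $B=T/C\cong T\otimes_R A$ is the base change of the $R$-flat module $T$ along $R\to A$. For the epimorphism property, Lemma~\ref{lem150510a} gives that $R\to T$ is a flat epimorphism, so the multiplication map $T\otimes_R T\to T$ is bijective. Base-changing this isomorphism along $R\to A$ yields natural identifications
$$B\otimes_A B\ \cong\ A\otimes_R(T\otimes_R T)\ \cong\ A\otimes_R T\ =\ B,$$
where the first step rests on the restriction-of-scalars computation $B\otimes_A(T\otimes_R A)\cong B\otimes_R T$ together with the commutation of tensor products. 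The only mildly subtle point---and the step I expect to require the most care---is tracing the canonical maps to verify that this composite coincides with the multiplication map $B\otimes_A B\to B$, so that the latter is an isomorphism; but this is a formal compatibility of the base-change identifications with the multiplication on $T$, and no honest calculation is needed.
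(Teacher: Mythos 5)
Your proof is correct, and it takes a genuinely different route from the paper's. The paper argues by contradiction at the level of the top row: assuming $B$ is faithfully flat over $A$, it shows that every maximal ideal of $R$ is the contraction of a maximal ideal of $T$ (splitting into the cases $\m\supseteq C$, handled via lying-over for $A\to B$, and $\m\not\supseteq C$, handled via \cite[Lemma~1.1.4(3)]{FHP}), concludes that $T$ is faithfully flat over $R$, and then invokes Remark~\ref{lem150419a} to get $T=R$, contradicting non-triviality. You instead work at the level of the bottom row: you observe that $B\cong\Otimes TA$ and that the flat epimorphism $R\to T$ of Lemma~\ref{lem150510a} base-changes along $R\to A$ to a flat epimorphism $A\to B$, so faithful flatness would force $A=B$, again contradicting Remark~\ref{rmk150615a}. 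Both arguments ultimately rest on the same fact --- a faithfully flat ring epimorphism is an isomorphism, the content behind Remark~\ref{lem150419a} --- but your descent-by-base-change argument is purely formal and avoids both the case analysis on maximal ideals and the external citation to \cite{FHP}; the paper's version, by contrast, records along the way the concrete lying-over information for $R\to T$. The reduction of the finite-generation claim to the faithful-flatness claim via Lemma~\ref{lem150316a}\eqref{lem150316a3} is identical in both.
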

 
\begin{proof}
Suppose by way of contradiction that $B$ is  faithfully flat over $A$.
We derive a contradiction by showing that $T=R$. 
To accomplish this, by Remark~\ref{lem150419a} it suffices to show that $T$ is faithfully flat over $R$.
Since $T$ is flat over $R$, it suffices to show that for each maximal ideal $\m\subset R$ there is a maximal ideal $\n\subset T$ such that
$\m=R\cap\n$.
If $\m$ contains $C$, then this follows from Lemma~\ref{lem150316a}\eqref{lem150316a2}, via the prime correspondence under quotients.
On the other hand, if $\m$ does not contain $C$, then there is a unique prime ideal $\q$ of $T$ contracting to $\m$ in $R$, by~\cite[Lemma 1.1.4(3)]{FHP}.
As in the proof of Lemma~\ref{lem150316a}\eqref{lem150316a2}, this yields a maximal ideal of $T$ contracting to $\m$.
(Of course, the uniqueness of $\q$ implies that $\q$ itself is maximal.)

Lastly, if $C$ were finitely generated over $R$, then $B$ would be faithfully flat over $A$, by Lemma~\ref{lem150316a}\eqref{lem150316a3},
contradicting the previous paragraph. 
\end{proof}

We conclude this section by documenting some corollaries and examples.

\begin{cor}\label{cor150506a}
Consider the non-trivial regular conductor square $\left( \square \right)$. 
Assume that $C$ is maximal in $R$, that is, that $A$ is a field.  
Then $T$ is not flat over $R$.
\end{cor}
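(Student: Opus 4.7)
The plan is to derive a contradiction directly from Theorem~\ref{prop150419a} by showing that the hypothesis ``$A$ is a field'' forces $B$ to be faithfully flat over $A$, which Theorem~\ref{prop150419a} forbids whenever $T$ is flat over $R$ in a non-trivial regular conductor square.

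First, I would suppose for contradiction that $T$ is flat over $R$. The inclusion $\iota_1\colon A\hookrightarrow B$ makes $B$ into a non-zero $A$-algebra: non-zero because the square is non-trivial, so Remark~\ref{rmk150615a} guarantees $B\neq 0$. Since $A=R/C$ is assumed to be a field, every non-zero $A$-module is free, hence flat; moreover, since $B\neq 0$ and the tensor product of $B$ with any non-zero $A$-module $N$ is non-zero (as $N$ is a direct sum of copies of $A$, so $\Otimes[A]BN$ is a direct sum of copies of $B$), the $A$-module $B$ is in fact faithfully flat.

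This contradicts Theorem~\ref{prop150419a}, which says that in a non-trivial regular conductor square with $T$ flat over $R$, the ring $B$ cannot be faithfully flat over $A$. Hence $T$ cannot be flat over $R$, completing the proof.

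I do not anticipate any obstacle here: the only things being used are the non-triviality of the square (to get $B\neq 0$), the standard fact that modules over fields are faithfully flat when non-zero, and the previously established Theorem~\ref{prop150419a}. The corollary is essentially a direct specialization of that theorem to the case when $A$ is a field.
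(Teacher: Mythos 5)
Your proof is correct and is essentially the paper's own argument: the paper also notes that since $A$ is a field the extension $A\to B$ is faithfully flat (using non-triviality to get $B\neq 0$) and then applies Theorem~\ref{prop150419a}. You have simply spelled out the standard justification that a non-zero module over a field is faithfully flat.
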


\begin{proof}
Since $A$ is a field, the extension $A\to B$ is faithfully flat. 
Now apply Theorem~\ref{prop150419a}.
\end{proof}

\begin{ex}\label{ex150612b}
Consider the regular pullback 
$$\xymatrix{
R=\bbq+x\bbr[x]\ \ar@{^(->}[r]^-{\iota_2} \ar@{->>}[d]_-{\eta_2}
& \bbr[x]\ar@{->>}[d]^-{\eta_1} \\ 
\bbq \ \ar@{^(->}[r]^-{\iota_1} &\bbr}
$$
where $\eta_1$ maps $x\mapsto 0$.
Then Corollary~\ref{cor150506a} shows that $\bbr[x]$ is not flat over $R$.
\end{ex}

\begin{cor}\label{150612a}
Consider the non-trivial regular conductor square $\left( \square \right)$. Assume that $R$ is a Pr\"ufer ring.
Then $C$ is not finitely generated over $R$
\end{cor}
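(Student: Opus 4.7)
The plan is to argue by contradiction: assume that $C$ is finitely generated as an $R$-module and deduce that $T=R$, which contradicts the non-triviality of the square via Remark~\ref{rmk150615a}.

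First I would observe that $C$ is a \emph{regular} ideal of $R$: the regularity of the square provides a $T$-regular element $c\in C$, and since $R\subseteq T$, this $c$ is automatically $R$-regular. Combined with the finite generation assumption and the Pr\"ufer hypothesis on $R$, the ideal $C$ is therefore invertible in $R$. Writing $Q(R)$ for the total quotient ring of $R$, this means $CC^{-1}=R$, where $C^{-1}=\{x\in Q(R)\mid xC\subseteq R\}$ is a fractional ideal of $R$.

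Next I would invoke Remark~\ref{lem150509a} (which cites~\cite[Proposition 2.5(i)]{boynton:rpb}) to realize $T$ as an overring of $R$, so that $R\subseteq T\subseteq Q(R)$. Since $C$ is an ideal of $T$ containing $1\cdot C$, we have $TC=C$. Multiplying inside $Q(R)$ by $C^{-1}$ gives
$$T=T\cdot R=T(CC^{-1})=(TC)C^{-1}=CC^{-1}=R,$$
which is the desired contradiction.

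The whole argument is brief; the only real subtlety—and, in my view, the main obstacle—is recognizing that the equation $TC=C$ should be leveraged inside the ambient ring $Q(R)$ using invertibility, rather than trying to extract a contradiction from within $R$ or $T$ alone. In particular, this route avoids any flatness hypothesis on $T$ over $R$, so Theorem~\ref{prop150419a} is not invoked directly; alternatively one could use that invertible ideals are flat and try to upgrade this to flatness of $T$ in order to apply Theorem~\ref{prop150419a}, but the direct computation above is cleaner.
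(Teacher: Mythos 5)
Your proof is correct, but it takes a genuinely different route from the paper. The paper's argument is two lines: since $R$ is Pr\"ufer, its overring $T$ is flat over $R$, and then Theorem~\ref{prop150419a} (the main result of that section, whose proof runs through faithful flatness of $B$ over $A$ and Lemma~\ref{lem150316a}) applies. You instead give a self-contained fractional-ideal computation: $C$ is a finitely generated regular ideal of $R$ (the $T$-regular element $c\in C$ is automatically $R$-regular since $R\subseteq T$), hence invertible by the Pr\"ufer hypothesis; combining $TC=C$ with $CC^{-1}=R$ inside $Q(R)$ forces $T=(TC)C^{-1}=CC^{-1}=R$, contradicting non-triviality via Remark~\ref{rmk150615a}. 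Each step checks out, including $T\subseteq Q(R)$ from the overring citation. What your approach buys is independence from Theorem~\ref{prop150419a} and from any flatness considerations; it also only invokes the property that finitely generated regular ideals are invertible (Griffin's definition of a Pr\"ufer ring with zero-divisors), whereas the paper's proof relies on the property that every overring of a Pr\"ufer ring is $R$-flat --- for rings with zero-divisors these Pr\"ufer-type conditions are not all equivalent, so your argument is, if anything, the more robust one. What the paper's approach buys is brevity and the stronger package of conclusions in Theorem~\ref{prop150419a} (e.g., that $B$ is not faithfully flat over $A$), which your direct computation does not recover.
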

 
\begin{proof}
Since $R$ is Pr\"ufer, every overring of $R$ is flat over $R$, including the overring $T$.
Now apply Theorem~\ref{prop150419a}.
\end{proof}

\begin{ex}\label{ex150612a}
Consider the regular pullback $(\boxtimes)$ with $r=1$, $A_1=\bbz$, $K=\bbq$, and $\theta_1=0$.
It is well known that the pullback $R=\bbz+x\bbq[x]$ is Pr\"ufer.
Thus, the conductor ideal $x\bbq[x]$ is not finitely generated over $R$.
\end{ex}

\begin{cor}\label{cor150506b}
Consider the non-trivial regular conductor square $\left( \square \right)$. 
Assume that $R$ is noetherian. Then $T$ is not flat over $R$.
\end{cor}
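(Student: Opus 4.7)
The plan is to derive a contradiction from the assumption that $T$ is flat over $R$ by combining the noetherian hypothesis with Theorem~\ref{prop150419a}. Specifically, since $C$ is an ideal of $R$ and $R$ is noetherian, $C$ is automatically finitely generated as an $R$-module. However, Theorem~\ref{prop150419a} applied to the non-trivial regular conductor square $(\square)$ with $T$ flat over $R$ asserts precisely that $C$ cannot be finitely generated over $R$. This is the desired contradiction, so $T$ cannot be flat over $R$.

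There is essentially no obstacle here: the entire content of the corollary is packaged into Theorem~\ref{prop150419a}, and the noetherian hypothesis is used only to guarantee that the conductor $C \subseteq R$ is finitely generated as an $R$-module. One should verify at the outset that $C$ is indeed an honest ideal of $R$ (which is immediate from the construction of the pullback, since $C = \ker(\eta_2)$), so that the noetherian hypothesis on $R$ applies to $C$.

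The write-up will therefore be a two-line argument: assume for contradiction that $T$ is $R$-flat; observe $C$ is an ideal of the noetherian ring $R$, hence finitely generated over $R$; then invoke Theorem~\ref{prop150419a} to obtain the contradiction.
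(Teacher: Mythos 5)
Your argument is correct and is exactly the paper's proof: the paper simply says ``Apply Theorem~\ref{prop150419a},'' and your write-up spells out the intended contradiction, namely that the noetherian hypothesis forces the conductor $C$ to be finitely generated over $R$, contradicting the conclusion of that theorem when $T$ is $R$-flat.
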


\begin{proof}
Apply Theorem~\ref{prop150419a}.
\end{proof}

\begin{ex}\label{ex150506a}
Assume that $A$ is a field with $B\neq A$, that $B$ is finitely generated as an $A$-module, and $T$ is noetherian.
Then  $R$ is noetherian by~\cite[Theorem 3.2]{boynton:rpb},
and $B$ is automatically flat over $A$, but each corollary shows that $T$ is never flat over $R$.
A specific example of this is the ring $R=\bbq+(x^2+1)\bbq[x]$,
arising from the data $A=\bbq$, $B=\bbq[i]$, and $T=\bbq[x]$.
\end{ex}

\section{Finite Conductor Rings and Coherent Rings}\label{sec150612a}

This section is devoted to the study of the transfer of the following two properties in flat, regular pullbacks.
Note that Gabelli and Houston~\cite{gabelli:ccp} investigate the special case where $C$ is maximal in $T$.

\begin{defn}\label{notn150509a}
We consider the following coherency conditions on a ring $R$.
\begin{enumerate}[(1)]
\item\label{notn150509a3}
\emph{finite conductor ring}: every 2-generated ideal of $R$ is finitely presented.
\item\label{notn150509a2}
\emph{coherent ring}: every finitely generated ideal of $R$ is finitely presented.
\end{enumerate}
\end{defn}

We investigate the stably coherent situation in~\cite{boynton:sncr}.

\begin{disc}\label{disc150509a}
Every B\'ezout domain is coherent, and every
coherent ring is a finite conductor ring.
In particular, in the case of trivial conductor squares, the results of this section follow from Proposition~\ref{prop150615a}.
\end{disc}

The coherent case of the next result is a special case of~\cite[Proposition 2.2]{greenberg:ccs}.

\begin{prop}\label{thm150506a}
Consider the regular conductor square $\left( \square \right)$. Assume that $T$ is flat as an $R$-module. 
If $R$ is a finite conductor ring (respectively, $R$ is coherent), then $T$ is as well.
\end{prop}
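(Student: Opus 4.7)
The plan is to reduce an arbitrary $2$-generated (resp.\ finitely generated) ideal of $T$ to one of the form $cJ$ for some $R$-ideal $J$ and some $T$-regular element $c\in C$, then exploit flatness of $T/R$ to transport finite presentations across the extension.

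First I would pick, for a given $2$-generated ideal $(s,t)T$ of $T$, a $T$-regular element $c\in C$, which exists by regularity of $(\square)$. Since $cT\subseteq C\subseteq R$, the products $cs$ and $ct$ lie in $R$, so $J:=(cs,ct)R$ is a $2$-generated ideal of $R$. By the finite conductor hypothesis on $R$, $J$ is finitely presented as an $R$-module.

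Next I would base-change to $T$. Because $T$ is flat over $R$, applying $-\otimes_R T$ to the inclusion $J\hookrightarrow R$ yields an injection $J\otimes_R T\hookrightarrow T$ whose image is $JT=(cs,ct)T$; thus the natural surjection $J\otimes_R T\twoheadrightarrow (cs,ct)T$ is an isomorphism. Finite presentation is preserved under base change, so $(cs,ct)T\cong J\otimes_R T$ is finitely presented as a $T$-module. Since $c$ is $T$-regular, multiplication by $c$ is an $R$-linear isomorphism $(s,t)T\xra{\cong} c(s,t)T=(cs,ct)T$, hence $(s,t)T$ is finitely presented over $T$, as needed.

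For the coherent case, the exact same argument works verbatim, replacing ``$2$-generated'' by ``finitely generated'': for $t_1,\dots,t_n\in T$, the ideal $(ct_1,\dots,ct_n)R$ is finitely generated and hence finitely presented since $R$ is coherent, then flatness and multiplication by $c$ transfer finite presentation to $(t_1,\dots,t_n)T$. I do not anticipate a serious obstacle here; the only point that needs a little care is verifying that $J\otimes_R T\to JT$ is an isomorphism (an immediate consequence of flatness applied to $0\to J\to R$) and that multiplication by the $T$-regular element $c$ is injective on $(s,t)T$, which is automatic.
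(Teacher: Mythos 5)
Your proposal is correct and follows essentially the same route as the paper's proof: replace the given ideal of $T$ by its isomorphic multiple $cJ$, which is generated by elements of $C\subseteq R$, use the finite conductor (resp.\ coherence) hypothesis on $R$ to get a finite presentation of the corresponding $R$-ideal, and then transport it to $T$ via right-exactness of $-\otimes_RT$ together with the flatness isomorphism $J\otimes_RT\cong JT$. The points you flag as needing care are exactly the ones the paper's argument relies on, and both are handled correctly.
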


\begin{proof}
Assume first that $R$ is a finite conductor ring,
and let $J$ be a 2-generated ideal of $T$.
It suffices to show that $J$ is finitely presented over $T$.
By assumption, there is a $T$-regular element $c\in C$.
It follows that the ideal $cJ$ is isomorphic to $J$ (in particular, it is 2-generated) so it suffices to show that $cJ$ is finitely presented over $T$.
Furthermore, $cJ$ is generated over $T$ by two elements of $cJ\subseteq C\subseteq R$.
Thus, we replace $J$ with $cJ$ to assume that $J$ is generated by two elements $r,s\in R$.

Since $R$ is a finite conductor ring, the ideal $I=(r,s)R$ is finitely presented over $R$.
By construction, we have $IT=J$.
From right-exactness, the module $T\otimes_RI$ is finitely presented over $T$.
By flatness, we have $T\otimes_RI\cong IT=J$, that is, $J$ is finitely presented over $T$, as desired.

The coherent case is proved similarly.
\end{proof}

The next two results show how our properties of interest descend in a flat, regular pullback square.

\begin{thm}\label{thm150412a}
Assume that the regular conductor square $\left( \square \right)$ satisfies condition (FP),
and every non-zero 2-generated ideal $I$ of $R$ satisfies condition (U1) from Definition~\ref{defn150522aa}.
If $A$ is a finite conductor ring, then so are $R$ and $T$.
\end{thm}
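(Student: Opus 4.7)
The plan is to handle $T$ and $R$ separately. For $T$, Proposition~\ref{prop150614a} immediately yields that $T$ is a B\'ezout domain, because $T$ is flat over $R$ and every non-zero 2-generated ideal of $R$ satisfies (U1). Since B\'ezout domains are coherent, $T$ is automatically a finite conductor ring. The substantive content is to show that $R$ itself is a finite conductor ring.

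For $R$, I take a 2-generated ideal $I$. The zero case is trivial; otherwise (U1) allows me to replace $I$ by an isomorphic ideal so that $IT=T$. Write $I=(x,y)R$ and let $K$ be the kernel of the surjection $\pi\colon R^2\to I$, $(a,b)\mapsto ax+by$. The Koszul syzygy $(y,-x)$ automatically lies in $K$. My strategy is to prove that $K$ is generated by $(y,-x)$ together with finitely many lifts to $K$ of $A$-module generators of $A\otimes_R K$; this requires two key inputs.

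The first input: I claim $KT=T\cdot(y,-x)$ inside $T^2$. Since $T$ is a B\'ezout domain with $IT=T$, the elements $x,y$ are coprime in $T$, so the standard syzygy argument shows that the kernel of the map $T^2\to T$ induced by $(x,y)$ is free of rank one, generated by $(y,-x)$. Flatness of $T$ over $R$ identifies this kernel with $T\otimes_R K=KT$. Setting $K':=K/R(y,-x)$, this gives $T\otimes_R K'=0$. Since (FP) provides an isomorphism $C\cong T$ of $R$-modules, I conclude $C\otimes_R K'=0$, hence $CK'=0$; so $K'$ carries a canonical $A$-module structure and equals $A\otimes_R K'$.

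The second input comes from Lemma~\ref{lem150517a}: tensoring $0\to K\to R^2\to I\to 0$ with $A$ and using $\Tor_1^R(A,I)=0$ together with $A\otimes_R I\cong IA$ yields an exact sequence $0\to A\otimes_R K\to A^2\to IA\to 0$. Since $IA$ is 2-generated over the finite conductor ring $A$, it is finitely presented, so $A\otimes_R K$ is finitely generated over $A$. The canonical surjection $K\twoheadrightarrow K'$ then induces a surjection $A\otimes_R K\twoheadrightarrow K'$, proving $K'$ is a finitely generated $A$-module, hence finitely generated over $R$. Combining a finite set of lifts with $(y,-x)$ exhibits $K$ as finitely generated, so $I$ is finitely presented. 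I expect the main obstacle to be the first input: recognizing the Koszul syzygy as the single $T$-generator of $KT$ is the step that lets the construction of $K'$ go through, after which the finite conductor hypothesis on $A$ does the rest.
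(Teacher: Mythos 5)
Your proof is correct, but it takes a genuinely different route from the paper's. For the ideal $I$ with $IT=T$, the paper does not compute syzygies directly: it invokes a general patching result of Glaz (\cite[Theorem 5.1.2]{glaz:ccr}) for the Cartesian square, which reduces finite presentation of $I$ over $R$ to finite presentation of $\Otimes TI\cong IT=T$ over $T$ and of $I/CI=IA$ over $A$, the latter following from the finite conductor hypothesis on $A$ exactly as in your ``second input.'' (For $T$ itself, the paper deduces the finite conductor property from that of $R$ via Proposition~\ref{thm150506a}, whereas you get it directly from Proposition~\ref{prop150614a} and the fact that B\'ezout domains are coherent; both are fine.) Your argument essentially re-proves the relevant case of Glaz's theorem by hand: the identification $KT=T\cdot(y,-x)$ via coprimality of $x,y$ in the B\'ezout domain $T$, the observation that $C\cong T$ forces $C\cdot\bigl(K/R(y,-x)\bigr)=0$, and the use of $\Tor 1AI=0$ from Lemma~\ref{lem150517a}\eqref{lem150517a2} to see that $\Otimes AK$ is the (finitely generated) syzygy module of $IA$ over $A$. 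What the paper's route buys is brevity and a clean reduction to a cited general principle; what yours buys is a self-contained argument that exposes the structure of the syzygy module $K$ explicitly as an extension of a finitely generated $A$-module by the cyclic Koszul module $R(y,-x)$, at the cost of redoing the patching argument in this special case. All the supporting steps you use (regularity of the generator of $C$, hence $C\cong T$; $\Otimes AI\cong IA$; preservation of finite presentation under the isomorphism $I\cong I'$) check out.
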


\begin{proof}
Assume that $A$ is a finite conductor ring.
By Proposition~\ref{thm150506a}, it suffices to show that $R$ is also a finite conductor ring.
Let $I$ be a 2-generated ideal of $R$.
We need to show that $I$ is finitely presented over $R$.

By assumption, there is an ideal $I'\subseteq R$ isomorphic to $I$ such that $I'T=T$.
Thus, we may replace $I$ with $I'$ to assume that $IT=T$. 
Lemma~\ref{lem150517a}\eqref{lem150517a1} implies that $IC=C$.

To show that $I$ is finitely presented over $R$, it suffices by~\cite[Theorem 5.1.2]{glaz:ccr} 
to show that $T\otimes_RI$ is finitely presented over $T$
and that $I/CI=I/C$ is finitely presented over $R/C=A$. 
Note that this uses the fact that $C\cong T$ is flat over $R$ and that $CT=C$.

Since $T$ is flat over $R$, we have $T\otimes_RI\cong IT=T$, which
is finitely presented over $T$.
Also, the ideal $IA$ is 2-generated over $A$, so it is finitely presented over $A$, since $A$ is a finite conductor ring.
That is, the $A$-module $I/CI=I/C=IA$ is finitely presented, as desired.
\end{proof}

\begin{thm}\label{thm150412az}
Assume that the regular conductor square $\left( \square \right)$ satisfies condition (FP),
and every finitely generated non-zero ideal $I$ of $R$ satisfies condition (U1) from Definition~\ref{defn150522aa}.
If $A$ is coherent, then so are $R$ and $T$.
\end{thm}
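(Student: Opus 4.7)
The plan is to follow essentially the same template as the proof of Theorem~\ref{thm150412a}, replacing ``2-generated'' by ``finitely generated'' throughout. First, by Proposition~\ref{thm150506a}, once we know $R$ is coherent it follows that $T$ is coherent as well, so it suffices to prove that $R$ is coherent. Let $I$ be an arbitrary finitely generated ideal of $R$; we want to show $I$ is finitely presented over $R$. If $I=0$ there is nothing to prove, so assume $I\neq 0$.

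Next, I would invoke the assumed (U1) condition to produce an ideal $I'\subseteq R$ with $I\cong I'$ and $I'T=T$. Replacing $I$ by $I'$ (which does not change the finite-presentation question), we may assume $IT=T$. Then Lemma~\ref{lem150517a}\eqref{lem150517a1} gives $IC=C$, so that $I/CI=I/C\cong IA=\Otimes AI$, and moreover $C\cong T$ is flat over $R$ with $CT=C$ by condition (FP).

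At this point I would apply~\cite[Theorem 5.1.2]{glaz:ccr} (as was done in the proof of Theorem~\ref{thm150412a}): to conclude that $I$ is finitely presented over $R$, it is enough to check that $\Otimes TI$ is finitely presented over $T$ and that $I/CI$ is finitely presented over $R/C=A$. The first condition is immediate: since $T$ is flat over $R$, we have $\Otimes TI\cong IT=T$, which is finitely presented over $T$. For the second, note that $IA$ is a finitely generated ideal of $A$ (being the image of a finitely generated ideal), hence finitely presented since $A$ is coherent; this gives the required finite presentation of $I/CI=IA$ over $A$.

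There is no real obstacle here beyond bookkeeping: the coherent case is proved by the exact same argument as Theorem~\ref{thm150412a} once one notes that (U1) is now available for \emph{all} finitely generated ideals of $R$ (not just the 2-generated ones) and that coherence of $A$ supplies the finite presentation of $IA$ for an arbitrary finitely generated ideal. Hence the proof reduces to essentially citing the finite-conductor argument verbatim, with ``2-generated'' upgraded to ``finitely generated'' at both the hypothesis on $R$ and the coherence hypothesis on $A$.
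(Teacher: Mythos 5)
Your proposal is correct and matches the paper's intent exactly: the paper's proof of this theorem is literally the one-line remark that it is ``similar to the proof of Theorem~\ref{thm150412a},'' and your write-up is precisely that argument with ``2-generated'' upgraded to ``finitely generated,'' using Proposition~\ref{thm150506a}, condition (U1), Lemma~\ref{lem150517a}, and \cite[Theorem 5.1.2]{glaz:ccr} in the same way.
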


\begin{proof}
This is similar to the proof of Theorem~\ref{thm150412a}.
\end{proof}

\begin{rmk}\label{disc150520a}
Note that Theorem~\ref{thm150412az} is similar in spirit to~\cite[Theorem 5.1.3]{glaz:ccr}:
with some extra assumptions, if $A$ and $T$ are coherent, then so is $R$.
However, our result covers some examples that~\cite[Theorem 5.1.3]{glaz:ccr} does not cover, and vice versa.
For instance, in the case $(\boxtimes)$ of~\ref{disc150513a}, if $r\geq 2$, and $A$ is non-noetherian with infinite weak dimension,
then Theorem~\ref{thm150412az} applies while~\cite[Theorem 5.1.3]{glaz:ccr} does not; see, e.g., Corollary~\ref{cor150517a}.

On the other hand, Theorem~\ref{thm150412az} is somewhat different from~\cite[Theorem 5.1.3]{glaz:ccr}, 
in that we only assume that $A$ is coherent. 
This is a byproduct of the assumption (U1) from Definition~\ref{defn150522aa}, in light of Proposition~\ref{prop150614a}. 
\end{rmk}

Our next results treat the ascent of our properties of interest.

\begin{thm}\label{thm150506c}
Assume that the regular conductor square $\left( \square \right)$ satisfies condition (FP)
and every non-zero 2-generated ideal $I$ of $R$ satisfies condition (U2) from Definition~\ref{defn150522aa}.
Assume further that $A$ is  locally a domain and has finite Krull dimension.
Then $R$ is a finite conductor ring if and only if  $A$ is a finite conductor ring; each of these conditions implies that $T$ is a finite conductor ring.
\end{thm}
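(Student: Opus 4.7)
The equivalence will follow from three pieces. The implication $A$ finite conductor $\Rightarrow$ $R$ finite conductor is immediate from Theorem~\ref{thm150412a}, since (U2) implies (U1). Once $R$ is known to be finite conductor, Proposition~\ref{thm150506a} will yield that $T$ is finite conductor as well. Thus the only nontrivial implication is $R$ finite conductor $\Rightarrow$ $A$ finite conductor.

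To prove this, let $J=(\bar r_1,\bar r_2)A$ be a 2-generated ideal of $A$, and lift its generators to $I=(r_1,r_2)R\subseteq R$ so that $IA=J$. Since $R$ is a finite conductor ring, $I$ is finitely presented over $R$. We plan to exploit (U2) to obtain a 2-generated ideal $U$ of $R$ with $UT=T$ and an $R$-module isomorphism $I\cong U$. By Lemma~\ref{lem150517a}\eqref{lem150517a1}, the module $UA$ is naturally isomorphic to $U\otimes_R A$, and tensoring a finite presentation of $U$ over $R$ with $A$ shows that $I\otimes_R A\cong U\otimes_R A\cong UA$ is finitely presented as an $A$-module.

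Using the flat resolution $0\to C\to R\to A\to 0$ of $A$ (with $C\cong T$ flat over $R$, by (FP) and Lemma~\ref{lem150510a}), we obtain the short exact sequence
$$0\to(I\cap C)/CI\to I\otimes_R A\to J\to 0,$$
identifying the kernel with $\tor_1^R(A,R/I)\cong(I\cap C)/CI$. Since $I\otimes_R A$ is finitely presented over $A$, the module $J$ will be finitely presented as soon as we show that $(I\cap C)/CI$ is finitely generated over $A$. Localizing at each prime $\mathfrak p$ of $A$ with preimage $\mathfrak P$ in $R$, the localized pullback $(\square_{\mathfrak P})$ is regular, still satisfies (FP) and (U2), and has $A_{\mathfrak p}$ a domain; the submodules $(UA)_{\mathfrak p}$ and $J_{\mathfrak p}$ are then torsion-free ideals of $A_{\mathfrak p}$ of rank at most one, and the induced surjection between them forces $((I\cap C)/CI)_{\mathfrak p}=0$ whenever $J_{\mathfrak p}\ne 0$. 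Hence the kernel is supported on the closed subset $\operatorname{V}(J)\subseteq\operatorname{Spec}(A)$.

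The main obstacle will be to upgrade this local support condition to global finite generation of $(I\cap C)/CI$ over $A$, and this is precisely where the finite Krull dimension of $A$ enters. The strategy will be an induction on $\dim\operatorname{V}(J)$: the base case, $\dim\operatorname{V}(J)=0$, reduces to a semisimple/Artinian situation in which finite generation is easy; the inductive step will restrict to a pullback square of strictly smaller Krull dimension supported on $\operatorname{V}(J)$ and invoke the inductive hypothesis. Completing this finite-generation step closes the argument.
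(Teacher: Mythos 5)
Your reduction of the problem is sound up to the last paragraph: the easy direction via Theorem~\ref{thm150412a} and Proposition~\ref{thm150506a} is correct, the identification of the kernel of $I\otimes_RA\to J$ with $(I\cap C)/CI\cong\Tor 1A{R/I}$ is correct, and the observation that this kernel vanishes locally at every prime $\p$ with $J_\p\neq 0$ (a surjection between nonzero torsion-free rank-one modules over the domain $A_\p$ has torsion, hence zero, kernel) is also correct. But the proof stops exactly where the real work begins. Your final paragraph does not prove that $(I\cap C)/CI$ is finitely generated; it announces a ``strategy'' of induction on $\dim\operatorname{V}(J)$ and never carries it out. Worse, the sketch is not salvageable as stated: the base case claim that $\dim\operatorname{V}(J)=0$ ``reduces to a semisimple/Artinian situation in which finite generation is easy'' is false in this generality, since $A$ is not assumed noetherian and a zero-dimensional (even local) ring can have non-finitely-generated ideals; and the inductive step (``restrict to a pullback square of strictly smaller Krull dimension supported on $\operatorname{V}(J)$'') names no actual square and gives no mechanism for descending dimension. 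Knowing that a module is supported on a closed set, or even that it is locally finitely generated everywhere, does not by itself yield global finite generation over a non-noetherian ring.

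The missing ingredient is a quantitative local-to-global principle, and this is precisely what the paper supplies via R.~Wiegand's theorem on dimension functions~\cite[Theorem 2.1]{wiegand:dfps}: if a module over a ring of finite Krull dimension is everywhere locally generated by at most $n$ elements, it is globally generated by at most $n+\dim(A)$ elements. The paper applies this to the kernel $K$ of the presentation $A^2\to J$, comparing it at each prime with the kernel $K'$ of $A^2\to J'$ (where $J'$ comes from the unitary ideal): either the multiplication map $J'_\p\to J_\p$ is injective, forcing $K_\p\cong K'_\p$, or $A_\p$ being a domain forces $J_\p=0$ and $K_\p\cong A_\p^2$. Your setup could be completed along the same lines --- your kernel $(I\cap C)/CI$ is locally either $0$ or isomorphic to the $2$-generated module $(I\otimes_RA)_\p$, so Wiegand's bound applies with $n=2$ --- but without citing such a uniform-bound result (and this is exactly where the finite Krull dimension hypothesis is consumed), the argument has a genuine gap.
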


\begin{proof}
In view of Proposition~\ref{thm150506a} and Theorem~\ref{thm150412az}, it remains to assume that $R$ is a finite conductor ring and prove that $A$ is so.
Let $J=(a,b)A$ be a 2-generated ideal of $A$. 
We need to prove that $J$ is finitely presented over $A$.
Assume without loss of generality that $J\neq 0$, and, further, that $a\neq 0$.
Let $f,g\in R$ be such that $\eta_2(f)=a$ and $\eta_2(g)=b$.
Set $I=(f,g)R$. 

Case 1: $IT=T$.
Since $I$ is 2-generated and $R$ is a finite conductor ring, the ideal $I$ is finitely presented over $R$.
It follows (e.g., by right-exactness of tensor-product) that the $A$-module $I/CI$ is finitely presented. 
Lemma~\ref{lem150517a}\eqref{lem150517a1} implies that $CI=C$, so the ideal 
$$I/C=IA=(f,g)A=(a,b)A=J$$ 
is finitely presented, as desired.

Case 2: 
there is an ideal $I'\subseteq R$ such that $I'T=T$ and an element $h\in I$ such that
the multiplication map $I'\xra h I$ is an isomorphism.
In particular, $I'$ is 2-generated,
by elements $f',g'\in I'$ such that $f=f'h$ and $g=g'h$.
Set $a'=\eta_2(f')$ and $b'=\eta_2(g')$ and $x=\eta_2(h)$.
It follows that we have 
$$a=\eta_2(f)=\eta_2(f')\eta_2(h)=a'x$$
and similarly $b=b'x$.
In particular, the condition $a\neq 0$ implies that $x\neq 0$.

Consider the natural surjection $\tau'\colon A^2\to J'$, 
represented by the row matrix $\begin{pmatrix}a'&b'\end{pmatrix}$.
By Case 1, the ideal $J'=(a',b')A$ is finitely presented over $A$,
so $K':=\Ker(\tau')$ is $n$-generated for some integer $n$.
Assume without loss of generality that $n\geq 2$.

To show that $J$ is finitely presented, consider the natural surjection $\tau\colon A^2\to J$, 
represented by the row matrix $\begin{pmatrix}a&b\end{pmatrix}$.
We need to show that $K:=\Ker(\tau)$ is finitely generated. 
We accomplish this using~\cite[Theorem 2.1]{wiegand:dfps}, which says that it suffices to show that 
$K_{\p}$ is $n$-generated over $A_\p$ for each prime $\p\subset A$.
(Here is where we use the finiteness of $\dim(A)$. In the language of~\cite{wiegand:dfps}, this allows us to conclude that the function
$$b(\p,M):=\begin{cases}\mu(A_\p,M_\p)+\dim(A/\p)&\text{if $\p\in\Supp_A(K)$} \\ 0&\text{otherwise}\end{cases}$$
satisfies $b(\p,M)\leq n+\dim(A)$ for all $\p$.)

Consider the multiplication map $J'\xra x J$, which is surjective (hence, locally surjective) by construction.
Fix a prime $\p\subset A$. 
If the map $J'_\p\xra x J_\p$ is injective, then it is an isomorphism (since locally surjective). 
In this situation, given the defining matrices for $\tau$ and $\tau'$, it follows that $K_\p\cong K'_\p$,
so $K_\p$ is also $n$-generated. 

Thus, we assume for the remainder of the proof of Case 2 that the map $J'_\p\xra x J_\p$ is not injective.

Claim: $J_\p=0$. 
Let $z\in J'$ and $s\in A\ssm \p$ be such that the element $z/t$ in $J'_\p$  is non-zero and satisfies $0=(z/t)x=(z/t)(x/1)$.
Since $A_\p$ is a domain by assumption, it follows that $x/1=0$ in $A_\p$.
Thus, we have $J_\p=(x/1)J'_\p=0$, as claimed. 

From the claim, it follows that $K_\p\cong A_\p^2$, which is 2-generated.
Thus, it is $n$-generated, since $n\geq 2$, as desired.

Case 3: the general case.
By assumption, 
there are elements $r,s\in R$ and an ideal $U\subseteq R$ such that $UT=T=rT$ and
$I\xra[\cong] r rI=sU\xla[\cong] sU$.
Case 2 shows that $rIA=sUA$ is finitely presented.
As in the proof of Case 2, the map $J=IA\xra rrIA$ is surjective.
The assumption $rT=T$ says that $r$ is a unit in $T$,
so $r$ represents a unit in $T/C=B$. 
In particular, multiplication by $r$, restricted to the subset $IA\subseteq A\subseteq B$ is injective,
so we have $J=IA\xra[\cong] rrIA$. Since $rIA$ is finitely presented over $A$, so is $J$, as desired.
\end{proof}

The next  result is proved like the previous one.
Compare to~\cite[5.1.3]{glaz:ccr}.

\begin{thm}\label{thm150506cz}
Assume that the regular conductor square $\left( \square \right)$ satisfies condition (FP)
and every non-zero finitely generated ideal $I$ of $R$ satisfies condition (U2) from Definition~\ref{defn150522aa}.
Assume further that $A$ is  locally a domain and has finite Krull dimension.
Then $R$ is coherent if and only if $A$ is coherent; each of these conditions implies that $T$ is coherent.
\end{thm}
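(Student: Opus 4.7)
The plan is to mirror the proof of Theorem~\ref{thm150506c} in the preceding pages, with ``finitely generated'' replacing ``$2$-generated'' throughout. The direction ``$A$ coherent $\implies$ $R$ coherent'' is already contained in Theorem~\ref{thm150412az}, and Proposition~\ref{thm150506a} gives ``$R$ coherent $\implies$ $T$ coherent''. So the only real work is to show that coherence of $R$ forces coherence of $A$.

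To do this, I would fix a nonzero finitely generated ideal $J=(a_1,\ldots,a_k)A$ with $a_1\neq 0$ (reordering if necessary), lift the generators via $\eta_2$ to obtain $I=(f_1,\ldots,f_k)R$, and split into three cases exactly as before. In Case~1 ($IT=T$), coherence of $R$ makes $I$ finitely presented, hence $I/CI$ is finitely presented over $A=R/C$; Lemma~\ref{lem150517a}\eqref{lem150517a1} gives $CI=C$, so $J=IA=I/C$ is finitely presented. In Case~2 (some $I'\subseteq R$ with $I'T=T$ admits an isomorphism $I'\xra[\cong]{h}I$ for an $h\in I$), I would write $f_i=hf_i'$ and set $x=\eta_2(h)$, $a_i'=\eta_2(f_i')$, and apply Case~1 to conclude that $J'=(a_1',\ldots,a_k')A$ is finitely presented. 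Case~3 reduces to Case~2 via~(U2) exactly as in the proof of Theorem~\ref{thm150506c}, using that $r$ represents a unit in $B$ to see multiplication by $r$ is injective on $IA$.

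The heart of the argument, which I would carry out essentially verbatim, is the Wiegand-style local-to-global step in Case~2. Since $J'$ is finitely presented, the kernel $K'$ of the natural surjection $A^k\twoheadrightarrow J'$ is $n$-generated for some integer $n$; enlarge $n$ so that $n\geq k$. To see that the kernel $K$ of $A^k\twoheadrightarrow J$ is finitely generated, I would apply~\cite[Theorem~2.1]{wiegand:dfps}; the finite Krull dimension of $A$ ensures that the relevant function $b(\p,K)$ is bounded. Locally at each prime $\p\subset A$, the multiplication map $J'_\p\xra{x}J_\p$ is surjective by construction, so either it is an isomorphism (forcing $K_\p\cong K'_\p$, which is $n$-generated), or the hypothesis that $A_\p$ is a domain forces $x/1=0$ in $A_\p$, whence $J_\p=0$ and $K_\p\cong A_\p^k$ is $k$-generated, hence $\leq n$-generated since $n\geq k$.

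The main obstacle is exactly this local bound on the number of generators of $K_\p$, and it is the same obstacle as in the 2-generated case; the only adjustment needed to accommodate $k$ generators in place of $2$ is to take $n\geq k$, which is harmless. No genuinely new idea beyond Theorem~\ref{thm150506c} is required.
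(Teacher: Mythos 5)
Your proposal is correct and matches the paper's intent exactly: the paper gives no separate proof of this theorem, stating only that it ``is proved like the previous one,'' i.e.\ like Theorem~\ref{thm150506c}, which is precisely the adaptation you carry out. The one genuine adjustment you identify --- replacing the bound $n\geq 2$ by $n\geq k$ so that $K_\p\cong A_\p^k$ is still $n$-generated in the degenerate local case of the Wiegand argument --- is the right one, and the rest goes through verbatim.
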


\begin{thm}\label{thm150507a}
Assume that the regular conductor square $\left( \square \right)$ satisfies condition (FP)
and every non-zero 2-generated ideal $I$ of $R$ satisfies condition (U2) from Definition~\ref{defn150522aa}.
Assume further that $A$ is a (possibly infinite) product $\prod_{\lambda\in\Lambda}A_\lambda$ of domains.
Then $R$ is a finite conductor ring if and only if $A$ is a finite conductor ring;
each of these conditions implies that $T$ is a finite conductor ring, and so is each ring $A_\lambda$.
\end{thm}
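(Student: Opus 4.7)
The ascent $A$ finite conductor $\implies$ $R$ finite conductor is immediate from Theorem~\ref{thm150412a}, since (U2) implies (U1); and the descent $R$ finite conductor $\implies$ $T$ finite conductor is Proposition~\ref{thm150506a}. For the descent from $A$ to each factor $A_\lambda$, my plan is to fix $\lambda\in\Lambda$, let $e_\lambda\in A$ be the idempotent with $\lambda$-component equal to $1$ and all other components equal to $0$, and, given a $2$-generated ideal $J=(a,b)A_\lambda$, form the $2$-generated lift $\ti J=(\ti a,\ti b)A\subseteq A$ supported at position $\lambda$. This ideal is annihilated by the principal ideal $(1-e_\lambda)A$, so if $A$ is finite conductor, then base-changing a finite presentation of $\ti J$ along the surjection $A\twoheadrightarrow A/(1-e_\lambda)A\cong A_\lambda$ will exhibit $J\cong A_\lambda\otimes_A\ti J$ as finitely presented over $A_\lambda$.

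For the main direction $R$ finite conductor $\implies$ $A$ finite conductor, my plan is to follow the three-case structure of the proof of Theorem~\ref{thm150506c}, replacing the appeal to Wiegand's theorem in Case~2 (which requires finite Krull dimension and the locally-a-domain hypothesis) by a direct idempotent argument available because $A=\prod_{\lambda\in\Lambda}A_\lambda$ is a product of domains. Given a $2$-generated ideal $J=(a,b)A$, I would lift $a,b$ to $f,g\in R$, set $I=(f,g)R$, and invoke (U2) to obtain elements $r,s\in R$ and an ideal $U\subseteq R$ with $UT=T=rT$ and $I\xra[\cong]{r}rI=sU\xla[\cong]{s}U$. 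Cases~1 and~3 of the proof of Theorem~\ref{thm150506c} should adapt verbatim: Case~1 (the special case $IT=T$) combines $R$ finite conductor with Lemma~\ref{lem150517a} to see that $J=I/CI=IA$ is finitely presented over $A$, and Case~3 reduces the general case to Case~2 using the fact that $r$ is a unit in $T/C=B$, so that multiplication by $r$ is injective on $IA\subseteq A\subseteq B$.

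The main obstacle will be Case~2. There one has an ideal $I'\subseteq R$ with $I'T=T$ and an element $h\in I$ inducing $I'\xra[\cong]{h}I$, and one must show that $J=xJ'$ is finitely presented over $A$, where $x=\eta_2(h)$ and $J'=I'A$ is finitely presented over $A$ by Case~1 applied to $I'$. Here the product-of-domains hypothesis will enter decisively: because each $A_\lambda$ is a domain, I expect $\ann_A(x)=eA$, where $e\in A$ is the idempotent whose $\lambda$-component is $1$ if $x_\lambda=0$ and $0$ otherwise. The kernel of the surjection $J'\xra{x}J$ will then equal $eJ'$, giving
\begin{equation*}
J\cong J'/eJ'\cong(1-e)A\otimes_A J',
\end{equation*}
which will be finitely presented over $(1-e)A$ as a base change of the finitely presented $A$-module $J'$. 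Since $(1-e)A=A/eA$ is a quotient of $A$ by the principal (hence finitely generated) ideal $eA$, standard descent of finite presentation along the quotient will then upgrade this to a finite presentation of $J$ over $A$ itself, completing the argument.
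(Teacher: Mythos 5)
Your proposal is correct, and it covers all four implications in the statement; the overall architecture (reduce via (U2) to the three cases, use Lemma~\ref{lem150517a} in the case $IT=T$, and handle the factors $A_\lambda$ by extending ideals by zero) matches the paper's proof of Theorem~\ref{thm150507a}. Where you genuinely diverge is in the treatment of the degenerate coordinates. The paper first splits on whether every pair $(a_\lambda,b_\lambda)$ is nonzero: in that case $x_\lambda\neq 0$ for all $\lambda$, so $J'\xra{x}J$ is injective outright; the remaining case is then reduced to this one by replacing $a$ with a modified element $\wti a$ and exhibiting $J$ as a direct summand of the resulting ideal $\wti J$. You instead handle an arbitrary $x$ in one stroke by computing $\ann_A(x)=eA$ for the idempotent $e$ supported where $x_\lambda=0$, identifying $\ker(J'\xra{x}J)=eJ'$, and concluding $J\cong (A/eA)\otimes_AJ'$ is finitely presented over the ring factor $A/eA\cong(1-e)A$ and hence over $A$ (since $eA$ is principal, finite presentation over the quotient lifts to $A$). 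Both arguments exploit the product-of-domains hypothesis in an essential way; yours eliminates the outer case split and the direct-summand bookkeeping, at the cost of invoking the (standard, and correct) facts that finite presentation is preserved under base change and descends along quotients by finitely generated ideals. Your Case~3 and your descent to the factors $A_\lambda$ are essentially verbatim the paper's arguments.
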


\begin{proof}
As in the proof of Theorem~\ref{thm150506c}, we assume that $R$ is a finite conductor ring, and we prove
that $A$ and $A_\lambda$ are finite conductor rings.
For $A$, let $J=(a,b)A$ with $a=(a_\lambda)\in \prod_{\lambda\in\Lambda}A_\lambda=A$ and $b=(b_\lambda)$.
To show that $J$ is finitely presented over $A$, we consider two cases.

Case 1: for each index $\lambda$, either $a_\lambda\neq 0$ or $b_\lambda\neq 0$.
Let $f,g\in R$ be such that $\eta_2(f)=a$ and $\eta_2(g)=b$, and set $I=(f,g)R$.
Since $R$ is a finite conductor ring, the ideal $I$ is finitely presented over $R$.
It follows (e.g., by right-exactness of tensor-product) that the quotient $I/CI$ is finitely presented over $A$. 

Sub-case 1a: $IT=T$. Then Lemma~\ref{lem150517a}\eqref{lem150517a1} implies that
$CI=C$, so the ideal 
$I/C=IA=J$
is finitely presented over $A$, by the previous paragraph. 

Sub-case 1b:  $I$ is isomorphic to an ideal $I'$ such that $I'T=T$ via a multiplication map
$I'\xra[\cong]h I$  for a fixed $h\in R$.
In particular, $I'$ is 2-generated,
by elements $f',g'\in I'$ such that $f=f'h$ and $g=g'h$.
Set $a'=\eta_2(f')$ and $b'=\eta_2(g)$ and $x=\eta_2(h)$.
Write $a'=(a'_\lambda)$ and similarly for $b'$ and $x$.
As in the proof of Theorem~\ref{thm150506c},
we have $a=xa'$, hence $a_\lambda=x_\lambda a'_\lambda$ and similarly for $b_\la$, for all $\lambda\in\Lambda$.
In particular, since $a_\lambda\neq 0$ or $b_\la\neq 0$ for each $\la$, we have $x_\lambda\neq 0$.

Since each ring $A_\lambda$ is a domain, and each coordinate of $x$ is non-zero,
the map $J':=(a',b')A\xra x(a,b)A=:J$ 
is injective;
it is surjective by construction. 
Since $I'T=T$, Sub-case 1a implies that $J'$ is finitely presented. 
Hence, the ideal $J\cong J'$ is finitely presented. This completes Sub-case 1b.

Now, we complete the proof in Case 1.
By assumption, there are elements $r,s\in R$ and an ideal $U\subseteq R$ such that $UT=T=rT$ and
$I\xra[\cong] r rI=sU\xla[\cong] sU$. Sub-case 1b implies that $rIA=sUA$ is finitely presented over $A$.
As in the proof of Theorem~\ref{thm150506c}, the map $J=IA\xra r rIA$ is an isomorphism, so $J$ is finitely presented as well.
This completes the proof in Case 1.

Case 2: the general case.
For each $\la\in\Lambda$, set
$$\wti a_\la:=\begin{cases}a_\la & \text{if $a_\la\neq 0$ or $b_\la\neq 0$} \\ 1 & \text{if $a_\la=0=b_\la$.} \end{cases}$$
Set $\wti a=(\wti a_\la)$ and $\wti J:=(\wti a,b)A$. Note that $\wti J$ satisfies the hypotheses of Case 1, so it is finitely presented.

For each  $\la\in\Lambda$, set $J_\la:=(a_\la,b_\la)A_\la$ and $\wti J_\la:=(\wti a_\la,b_\la)A_\la$.
Note that we have
$$\wti J_\la= \begin{cases}J_\la & \text{if $J_\la\neq 0$} \\ A_\la & \text{if $J_\la=0$.} \end{cases}$$
From this, it is straightforward to show that $J$ is a direct summand of $\wti J$.
(Specifically, for each $\la\in\Lambda$, set
$$\ol J_\la= \begin{cases}0 & \text{if $J_\la\neq 0$} \\ A_\la & \text{if $J_\la=0$.} \end{cases}$$
Then one has $\wti J\cong J\bigoplus\ol J$.)
Since $\wti J$ is finitely presented over $A$, it is straightforward to show that each summand (in particular, the summand $J$) is finitely presented over $A$.
This completes Case 2, so we conclude that $A$ is a finite conductor ring.

Fix an index $\mu\in\Lambda$.
To show that $A_\mu$ is a finite conductor ring, let $J_\mu=(a_\mu,b_\mu)A_\mu$ be a 2-generated ideal of $A_\mu$.
For all $\lambda\in\Lambda\ssm\{\la\}$, set $a_\la=0=b_\la$ and $J_\la=0$.
Also, set $a=(a_\lambda)\in \prod_{\lambda\in\Lambda}A_\lambda=A$ and $b=(b_\lambda)$.
Since $A$ is a finite conductor ring, the ideal $(a,b)J=\prod_{\la\in\Lambda}J_\la$ is finitely presented over $A=\prod_{\la\in\Lambda}A_\la$.
It follows readily that $J_\mu$ is finitely presented over $A_\mu$, as desired. 
\end{proof}

The next result is proved like the previous one.

\begin{thm}\label{thm150507az}
Assume that the regular conductor square $\left( \square \right)$ satisfies condition (FP)
and every non-zero finitely generated ideal $I$ of $R$ satisfies condition (U2) from Definition~\ref{defn150522aa}.
Assume further that $A$ is a (possibly infinite) product $\prod_{\lambda\in\Lambda}A_\lambda$ of domains.
Then $R$ is coherent if and only if $A$ is coherent;
each of these conditions implies that $T$ is coherent, and so is each ring $A_\lambda$.
\end{thm}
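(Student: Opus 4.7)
The plan is to imitate the proof of Theorem~\ref{thm150507a} almost verbatim, upgrading ``2-generated'' to ``finitely generated'' everywhere. The ``if'' direction (from $A$ coherent to $R$ coherent) is given directly by Theorem~\ref{thm150412az}, whose hypothesis that every non-zero finitely generated ideal satisfies (U1) is implied by our standing hypothesis that every such ideal satisfies (U2). The ascent $R$ coherent $\Rightarrow T$ coherent is immediate from Proposition~\ref{thm150506a}. Thus the only new content is the descent from $R$ coherent to $A$ coherent and to each factor $A_\lambda$ coherent.

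For the descent to $A$, I would take a finitely generated ideal $J=(a^{(1)},\ldots,a^{(n)})A$, write $a^{(k)}=(a^{(k)}_\la)_{\la\in\Lambda}$, and split into two cases as before. In \textbf{Case 1}, where for every $\la\in\Lambda$ there exists some $k$ with $a^{(k)}_\la\neq 0$, I would lift each $a^{(k)}$ to an element $f^{(k)}\in R$ via $\eta_2$ and set $I=(f^{(1)},\ldots,f^{(n)})R$. Since $R$ is coherent, $I$ is finitely presented, and then right-exactness of $\Otimes A-$ yields that $I/CI$ is finitely presented over $A$. Now I would run the three sub-cases exactly as in the proof of Theorem~\ref{thm150507a}: sub-case 1a ($IT=T$) uses Lemma~\ref{lem150517a}\eqref{lem150517a1} to get $CI=C$ and hence $J\cong I/C$ is finitely presented; sub-case 1b uses an isomorphism $I'\xra[\cong]{h} I$ with $I'T=T$ to reduce to sub-case 1a, observing that $x=\eta_2(h)$ has every coordinate non-zero precisely because of the Case 1 hypothesis, so that multiplication by $x$ is injective on $J'\subseteq A=\prod A_\la$ (each $A_\la$ a domain) and thus gives an isomorphism $J'\xra[\cong]{x} J$; and the general (U2)-case reduces to sub-case 1b via $I\xra[\cong]{r} rI=sU\xla[\cong]{s}U$ exactly as before.

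In \textbf{Case 2} (the general case) I would replace the zero components to reduce to Case 1: for each $\la\in\Lambda$ set
\[
\wti a^{(1)}_\la=\begin{cases}a^{(1)}_\la & \text{if some $a^{(k)}_\la\neq 0$}\\ 1 & \text{if $a^{(k)}_\la=0$ for all $k$,}\end{cases}
\]
put $\wti a^{(1)}=(\wti a^{(1)}_\la)$, and form $\wti J=(\wti a^{(1)},a^{(2)},\ldots,a^{(n)})A$. Then $\wti J$ satisfies the hypothesis of Case 1, so $\wti J$ is finitely presented. The factorwise description shows $\wti J\cong J\oplus\ol J$ where $\ol J=\prod_{\la}\ol J_\la$ with $\ol J_\la=A_\la$ when $J_\la=0$ and $\ol J_\la=0$ otherwise, so $J$ is a direct summand of a finitely presented module and hence finitely presented. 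Finally, for each factor $A_\mu$, given a finitely generated ideal $J_\mu\subseteq A_\mu$, I would embed it coordinatewise into $A$ by setting all other components to zero, apply the coherence of $A$ just established, and extract the $\mu$-component to conclude that $J_\mu$ is finitely presented over $A_\mu$.

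No new technical obstacle appears beyond those already handled in Theorem~\ref{thm150507a}; the mild bookkeeping is to verify that the ``non-zero coordinate'' argument in sub-case 1b still produces injectivity of multiplication by $x$ when there are $n\geq 2$ generators, which is immediate since each $A_\la$ is a domain and Case 1 forces $x_\la\neq 0$ at every index in $\Lambda$.
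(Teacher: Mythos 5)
Your proposal is correct and is essentially the paper's own argument: the paper proves Theorem~\ref{thm150507az} by declaring it ``proved like the previous one,'' i.e.\ by running the proof of Theorem~\ref{thm150507a} with ``2-generated'' replaced by ``finitely generated,'' which is exactly what you do, including the same case split, the same use of Theorems~\ref{thm150412az} and~\ref{thm150506a} for the other implications, and the same direct-summand reduction in the general case. The one point you flag --- injectivity of multiplication by $x$ in sub-case 1b for $n$ generators --- is handled correctly.
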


\begin{disc}\label{rmk150508a}
Let $A$ be a (possibly infinite) product $\prod_{\lambda\in\Lambda}A_\lambda$ of non-zero commutative rings with identity.
The last paragraph of the proof of Theorem~\ref{thm150507a} shows that, if $A$ is a finite conductor ring,
then the same is true of each factor of $A$. This implies that any sub-product $\prod_{\lambda\in\Lambda'}A_\lambda$ 
with $\Lambda'\subseteq\Lambda$ is also a  finite conductor ring.
Conversely, if $\Lambda$ is finite, and each $A_\la$ is a finite conductor ring, then so is the finite product $A$.
Example~\ref{ex150508a} shows that this converse fails when $\Lambda$ is infinite. 
We deduce that, when $\Lambda$ is infinite, the conclusion of Theorem~\ref{thm150507a} is very strong.
(Similar comments hold for the other classes of rings considered below.)
\end{disc}

\begin{ex}\label{ex150508a}
Let $k$ be a field.
For each integer $n\geq 2$, consider the polynomial ring
$S_n:=k[X_1,\ldots,X_n,Y_1,\ldots,Y_n]$.
Let $\fa_n\subseteq S_n$ denote the ideal generated by the $2\times 2$ minors of the matrix
$\left(\begin{smallmatrix}X_1&\ldots&X_n\\ Y_1&\ldots&Y_n\end{smallmatrix}\right)$,
and set $A_n:=S_n/\fa_n$.
Consider the 2-generated ideal $I_n:=(X_1,Y_1)A_n$ and the natural surjection $\tau_n\colon A_n^2\to I_n$ with kernel $K_n$.
It is straightforward to show that $K_n$ contains the following vectors:
$\left(\begin{smallmatrix}Y_1\\ -X_1\end{smallmatrix}\right),
\left(\begin{smallmatrix}Y_2\\ -X_2\end{smallmatrix}\right),\ldots,
\left(\begin{smallmatrix}Y_n\\ -X_n\end{smallmatrix}\right)$.
Moreover, since $\tau_n$ is minimal, and the entries of these vectors are homogeneous and linear,
we conclude that these vectors are minimal generators of $K_n$. 
In particular, since $A_n$ and $I_n$ are graded, we conclude that each $K_n$ requires at least $n$ generators.

Now, set $A:=\prod_{n=2}^\infty A_n$, and consider the ideal $I:=\prod_{n=2}^\infty I_n\subseteq A$.
Since each $I_n$ is 2-generated over $A_n$, the ideal $I$ is 2-generated over $A$.
The product $\tau\colon A^2\to I$ of the maps $\tau_n$ has kernel $K=\prod_{n=2}^\infty K_n$.
Since each $K_n$ requires at least $n$ generators, it follows that $K$ is not finitely generated. 
Thus, even though each $A_n$ is noetherian (hence coherent and a finite conductor ring),
the product $A$ is neither coherent nor a finite conductor ring.
\end{ex}

\begin{cor}\label{cor150517a}
Consider the regular conductor square $(\boxtimes)$ from Remark~\ref{disc150513a}.
Then  $R$ is a finite conductor ring (resp., coherent)
if and only if each $A_i$ is so.
\end{cor}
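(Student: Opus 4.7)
The plan is to reduce this corollary directly to the two main theorems of the present section, Theorem~\ref{thm150507a} and Theorem~\ref{thm150507az}, so that almost no new work is required.

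First I would verify that the hypotheses of those theorems are met by the square $(\boxtimes)$. Since $T = K[X]$ is a PID and is a localization of $R$, Remark~\ref{disc150513a} tells us that condition (FP) is satisfied and that every non-zero finitely generated ideal of $R$ satisfies both conditions (U1) and (U2) from Definition~\ref{defn150522aa}. Moreover, the bottom-left corner $A = A_1 \times \cdots \times A_r$ is precisely a finite product of the domains $A_i$, which is the form of $A$ required in the hypotheses of Theorems~\ref{thm150507a} and~\ref{thm150507az}.

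Second I would apply Theorem~\ref{thm150507a} (respectively, Theorem~\ref{thm150507az}) directly to conclude that $R$ is a finite conductor ring (respectively, coherent) if and only if $A = \prod_{i=1}^{r} A_i$ is. Third, since the indexing set $\{1,\ldots,r\}$ is \emph{finite}, I would invoke Remark~\ref{rmk150508a}, which records that a finite product of rings has either of these coherent-like properties if and only if each factor does. Chaining these two equivalences yields the desired biconditional.

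I do not anticipate a genuine obstacle: all of the substantive technical work has already been carried out in the proofs of Theorems~\ref{thm150507a} and~\ref{thm150507az}, and the hypothesis-checking has been done once and for all in Remark~\ref{disc150513a}. The corollary is essentially a bookkeeping exercise that extracts the finite-product special case, with the only minor care being that one stays on the ``finite $\Lambda$'' side of Remark~\ref{rmk150508a} (since Example~\ref{ex150508a} shows the converse direction fails in general for infinite products).
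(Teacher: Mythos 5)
Your proposal is correct and follows the same route as the paper: the paper's proof simply notes that Remark~\ref{disc150513a} verifies the hypotheses of Theorems~\ref{thm150507a} and~\ref{thm150507az} and then applies them. Your additional explicit appeal to Remark~\ref{rmk150508a} to pass between $A=\prod_{i=1}^r A_i$ and the individual factors $A_i$ (legitimate because the product is finite) is a detail the paper leaves implicit, and it is exactly the right way to close the biconditional.
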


\begin{proof}
Remark~\ref{disc150513a} says that the hypotheses of 
Theorems~\ref{thm150507a}--\ref{thm150507az} are satisfied.
\end{proof}

\begin{cor}\label{cor150517b}
Let $D$ be a domain, and let $E\subseteq D$ be a finite subset.
Then the ring of integer-valued polynomials
$\operatorname{Int}(E,D)$ is a finite conductor ring (respectively, coherent)
if and only if $D$ is so.
\end{cor}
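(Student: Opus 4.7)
The plan is to recognize $\operatorname{Int}(E,D)$ as a special case of the pullback $(\boxtimes)$ from Remark~\ref{disc150513a}, and then invoke Corollary~\ref{cor150517a} directly. Write $E=\{e_1,\ldots,e_r\}$ and let $K$ denote the field of fractions of $D$. Since each $e_i$ lies in $D\subseteq K$, its minimal polynomial over $K$ is the linear polynomial $p_i=X-e_i$, and these polynomials are pairwise coprime because the $e_i$ are distinct. Setting $\theta_i=e_i$ we have $K[\theta_i]=K$, so we may take $A_i=D$ for each $i$ (noting that $D$ has field of fractions $K$). With these choices, the diagram $(\boxtimes)$ is precisely the square used in the introduction to define $\operatorname{Int}(E,D)$: we have $A=D^r$, $T=K[X]$, $B=K^r$, surjection $\eta_1\colon f\mapsto (f(e_1),\ldots,f(e_r))$, and conductor $C=(X-e_1)\cdots(X-e_r)K[X]$. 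The resulting pullback $R$ is $\operatorname{Int}(E,D)$, as noted at the end of Remark~\ref{disc150513a}.

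Next I would apply Corollary~\ref{cor150517a} to this square. That corollary says that $R=\operatorname{Int}(E,D)$ is a finite conductor ring (respectively, coherent) if and only if each factor $A_i$ is. Since every $A_i$ coincides with $D$, this biconditional collapses to the assertion that $\operatorname{Int}(E,D)$ has the coherent-like property precisely when $D$ does, which is the content of the corollary.

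There is no real obstacle here: the substantive work has already been done in Corollary~\ref{cor150517a} and, behind it, in Theorems~\ref{thm150507a} and~\ref{thm150507az}. The present statement is essentially the translation of that general result to the classical setting of integer-valued polynomials on a finite subset, and the only thing to verify is the identification of $\operatorname{Int}(E,D)$ with the pullback $R$ of a square of type $(\boxtimes)$, which is immediate from the discussion in Remark~\ref{disc150513a}.
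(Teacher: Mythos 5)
Your proposal is correct and follows the paper's own proof exactly: identify $\operatorname{Int}(E,D)$ with the special case of the square $(\boxtimes)$ in which $\theta_i=e_i$, $K[\theta_i]=K$, and $A_i=D$ for all $i$, and then apply Corollary~\ref{cor150517a}. The extra details you supply (the minimal polynomials $X-e_i$ being pairwise coprime, the identification of the conductor) are the right verifications and are implicit in the paper's one-line argument.
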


\begin{proof}
We have $R=\operatorname{Int}(E,D)$ in the following special case of the conductor square $(\boxtimes)$ with $r=|E|$:
$$\xymatrix{
R\ \ar@{^(->}[r]^-{\iota_2} \ar@{->>}[d]_-{\eta_2}
& K[X]\ar@{->>}[d]^-{\eta_1} \\ 
D^r \ \ar@{^(->}[r]^-{\iota_1} &K^r.}
$$
Thus, the desired conclusion follows from Corollary~\ref{cor150517a}.
\end{proof}

\section{GCD Domains and Generalizaed GCD Rings}
\label{sec150612b}

Next, we turn our attention to transfer of the following two GCD properties.

\begin{defn}\label{notn150509a'}
We consider the following coherency conditions on a ring $R$.
\begin{enumerate}[(1)]
\item\label{notn150509a4}
\emph{generalized GCD ring}: 
every principal ideal of $R$ is projective, and  every intersection of
two finitely generated flat ideals is finitely generated and flat.
\item\label{notn150509a4'}
\emph{GCD domain}: $R$  is a domain such that every intersection of
two principal ideals is principal.
\end{enumerate}
\end{defn}

\begin{rmk}\label{disc150507az}
Let $R$ be a domain. It is straightforward to show that $R$ is a GCD domain if for every 2-generated ideal $(r,s)R$ with $r,s\neq 0$ the kernel of the natural map
$R^2\to(r,s)R$ is cyclic; indeed, the kernel of this map is isomorphic to $rR\bigcap sR$.
Note that whenever this kernel is cyclic, it is isomorphic to $R$, since it is isomorphic to a non-zero principal ideal in the domain $R$.

In particular, this shows that every B\'ezout domain is a GCD domain, and every GCD domain is a finite conductor domain.
\end{rmk}

\begin{disc}\label{disc150509a'}
A result of Glaz~\cite{glaz:fcp} says that $R$ is a generalized GCD ring if and only if it is a finite conductor ring and locally a GCD domain.
Thus, every GCD domain is a generalized GCD ring, by Remark~\ref{disc150507az}.
In particular, in the case of trivial conductor squares, the results of this section follow from Proposition~\ref{prop150615a}.
\end{disc}

Our transfer results for this context begin with the following ascent result.

\begin{prop}\label{thm150506a'}
Consider the regular conductor square $\left( \square \right)$. Assume that $T$ is flat as an $R$-module. 
If $R$ is a GCD domain (respectively, a generalized GCD ring), then $T$ is as well.
\end{prop}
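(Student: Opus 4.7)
The plan is to treat the GCD-domain case first via a direct flatness computation, and then deduce the generalized-GCD case by localizing and appealing to Glaz's characterization recorded in Remark~\ref{disc150509a'}.

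For the GCD-domain case, Remark~\ref{lem150509a} gives that $T$ is a domain. Fix non-zero $r,s\in T$; by Remark~\ref{disc150507az} it suffices to prove that $rT\cap sT$ is principal. Choose a $T$-regular element $c\in C$. Since $T$ is a domain, multiplication by $c$ yields an isomorphism $rT\cap sT\cong (cr)T\cap (cs)T$; as $cr,cs\in C\subseteq R$, we may assume $r,s\in R$. Because $R$ is a GCD domain, $rR\cap sR=tR$ for some $t\in R$, and $t\ne 0$ since $rs\in rR\cap sR\setminus\{0\}$. Applying $-\otimes_R T$ to the exact sequence
\[
0\to rR\cap sR\to rR\oplus sR\to rR+sR\to 0,
\]
and using flatness of $T$ over $R$ to identify $rR\otimes_R T\cong rT$, $sR\otimes_R T\cong sT$, and $(rR+sR)\otimes_R T\cong rT+sT$, yields $(rR\cap sR)\otimes_R T\cong rT\cap sT$. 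Combining with $rR\cap sR=tR\cong R$ then gives $rT\cap sT\cong T$, which is principal.

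For the generalized-GCD case, Remark~\ref{disc150509a'} reduces the task to showing that $T$ is a finite conductor ring and is locally a GCD domain. The former is Proposition~\ref{thm150506a}. For the latter, fix $\fq\in\spec[T]$ and set $\fp:=R\cap\fq$, so that $R_\fp$ is a GCD domain. If $C\not\subseteq\fp$, then $CR_\fp=R_\fp$, so the localized conductor square has zero bottom-left entry and is trivial by Remark~\ref{rmk150615a}; hence $R_\fp=T_\fp$, and $T_\fq$, as a localization of $T_\fp=R_\fp$, is a GCD domain. If instead $C\subseteq\fp$, then the localized square $\square_\fp$ is a non-trivial regular conductor square (a $T$-regular element of $C$ remains $T_\fp$-regular by exactness of localization) whose top extension $R_\fp\to T_\fp$ is flat; applying the GCD-domain case already proved to $\square_\fp$ shows that $T_\fp$ is a GCD domain, whence its localization $T_\fq$ is a GCD domain as well.

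The main obstacle is the bookkeeping in the localization argument for the generalized-GCD case: one must verify that the localized square inherits regularity and flatness from $\square$ when $C\subseteq\fp$ and recognize that in the complementary case $C\not\subseteq\fp$ the localized square degenerates to triviality in the sense of Remark~\ref{rmk150615a}. Both points rely on the behavior of conductor squares under localization at primes of $A$, as used in Lemma~\ref{lem150316a}.
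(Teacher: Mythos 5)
Your proof is correct and follows essentially the same route as the paper: reduce the intersection of two principal ideals of $T$ to one of ideals generated in $C\subseteq R$ via a $T$-regular element, use flatness to identify $(rR\cap sR)T$ with $rT\cap sT$, and handle the generalized-GCD case by combining Proposition~\ref{thm150506a} with localization at primes of $T$ contracted to $R$. Your explicit exact-sequence verification that flat base change commutes with the intersection, and your case split on whether $C\subseteq\fp$, are just more detailed renderings of steps the paper takes for granted.
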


\begin{proof}
Assume that $R$ is a  GCD domain.
Remark~\ref{lem150509a} implies that $T$ is a domain.
Let $t,u\in T\ssm\{0\}$ and consider the intersection $tT\bigcap uT$.
Let $c\in C$ be a $T$-regular element.
It is straightforward to show that $ctT\bigcap cuT=c(tT\bigcap uT)\cong tT\bigcap uT$.
Thus, to show that $tT\bigcap uT$ is principal, it suffices to show that $ctT\bigcap cuT$ is principal.
Hence, we assume without loss of generality that $t,u\in C\subseteq R$.
Since $R$ is a GCD domain, we have $tR\bigcap uR=rR$ for some $r\in R$.
The flatness of $T$ over $R$ implies that
$$tT\bigcap uT=tRT\bigcap uRT=(tR\bigcap uR)T=rRT=rT$$
as desired.

Next, assume that $R$ is a generalized GCD ring, i.e., a finite conductor ring and locally a GCD domain; see Remark~\ref{disc150509a'}.
Consider a prime ideal $Q\subset T$, and set $P:=R\bigcap Q$.
Then $T_Q$ is a localization of the ring $T_P$.
Since $R_P$ is a GCD domain, the localized square $(\square_P)$ from Lemma~\ref{lem150316a}
shows that $T_P$ is a GCD domain, by the previous paragraph.
It follows that $T_Q$ is a GCD domain as well.
Also, since
$R$ is a finite conductor ring,
Proposition~\eqref{thm150506a}
implies that $T$ is a finite conductor ring,
so we conclude that $T$  is a generalized GCD ring, again by Remark~\ref{disc150509a'}.
\end{proof}

\begin{thm}\label{thm150412a'}
Assume that the regular conductor square $\left( \square \right)$ satisfies condition (FP)
and every non-zero 2-generated ideal $I$ of $R$ satisfies condition (U1) from Definition~\ref{defn150522aa}.
If $C$ is contained in the Jacobson radical of $R$ (e.g., if $R$ is local), and $A$ is a  GCD domain, then $R$ and $T$ are GCD domains as well.
\end{thm}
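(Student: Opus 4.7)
The plan is to verify that $R$ is a GCD domain; Proposition~\ref{thm150506a'} will then deliver the conclusion for $T$. The argument proceeds in four short stages.

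First, Proposition~\ref{prop150614a} applied to the (U1) hypothesis forces $T$ to be a B\'ezout domain, in particular a domain, and Remark~\ref{lem150509a} transfers ``domain'' to $R$. Since $A$ is a GCD domain, it is a finite conductor ring (Remark~\ref{disc150507az}), and Theorem~\ref{thm150412a} then shows that $R$ is a finite conductor ring as well.

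Second, by Remark~\ref{disc150507az} it suffices to check, for each 2-generated ideal $I=(r,s)R$ with $r,s\neq 0$, that the kernel $K$ of the natural surjection $R^2\to I$ is cyclic. Use (U1) to pick $I'\subseteq R$ with $I'\cong I$ and $I'T=T$. Schanuel's lemma applied to 2-generator presentations of the common module $I\cong I'$, combined with the fact that over a commutative ring every stably free rank-one module is free, shows that cyclicity of the kernel is an isomorphism invariant of a 2-generated ideal. Hence we may replace $I$ by $I'$ and assume $IT=T$ throughout.

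Third, tensor $0\to K\to R^2\to I\to 0$ with $A$. Step~1 ensures $K$ is finitely generated over $R$. Lemma~\ref{lem150517a}\eqref{lem150517a2} gives $\Tor 1AI=0$, and Lemma~\ref{lem150517a}\eqref{lem150517a1} gives $CI=C$, so $\Otimes AI\cong I/C=IA$; the result is the exact sequence $0\to K/CK\to A^2\to IA\to 0$. In the non-trivial setting, $IT=T$ precludes $I\subseteq C$, so $IA\neq 0$ is a (possibly degenerate) 2-generated ideal of the GCD domain $A$. Hence $K/CK$ is cyclic as an $A$-module, therefore as an $R$-module. Since $K$ is finitely generated over $R$ and $C$ is contained in the Jacobson radical, Nakayama's lemma upgrades cyclicity of $K/CK$ to cyclicity of $K$, completing the proof.

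I expect the reduction in the second step to be the main obstacle, as it relies on the slightly subtle Schanuel-type observation that cyclicity of a 2-generator syzygy is an isomorphism invariant of the ideal. The trivial conductor square case, if it arises, is handled separately via Proposition~\ref{prop150615a}, which immediately makes $R=T$ B\'ezout and hence a GCD domain. The remaining steps are routine applications of Lemma~\ref{lem150517a}, Theorem~\ref{thm150412a}, and Nakayama's lemma.
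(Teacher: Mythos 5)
Your proof is correct and follows essentially the same route as the paper: reduce via (U1) to the case $IT=T$, tensor the presentation $0\to K\to R^2\to I\to 0$ with $A$ using $\Tor 1AI=0$ and $IC=C$ from Lemma~\ref{lem150517a}, deduce that $K/CK$ is cyclic because $A$ is a GCD domain, and finish with Nakayama and Proposition~\ref{thm150506a'}. Your Schanuel-plus-stably-free justification that cyclicity of the syzygy is an isomorphism invariant is a welcome elaboration of a step the paper leaves implicit (there one can also note that any isomorphism of nonzero ideals in a domain is multiplication by an element of the fraction field, so the kernels literally coincide), and your observation that $IT=T$ directly rules out $I\subseteq C$ is a slightly more economical substitute for the paper's appeal to Theorem~\ref{prop150419a}.
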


\begin{proof}
By Proposition~\ref{prop150615a} and
Remark~\ref{disc150507az}, we assume that $\left( \square \right)$ is non-trivial.

Assume that $C$ is contained in the Jacobson radical of $R$, and $A$ is a  GCD domain.
Remark~\ref{disc150507az} shows that $A$ is a finite conductor ring.
Thus, Theorem~\ref{thm150412a} implies that $R$ is a finite conductor ring.
Also, by Proposition~\ref{prop150614a}, we know that $T$ is a domain, hence $R$ is also a domain by Remark~\ref{lem150509a}.

Let $I=(r,s)R$ such that $r,s\neq 0$.
As in the proof of Theorem~\ref{thm150412a}, we assume that $IT=T$, hence $I\supseteq IC=C$ by Lemma~\ref{lem150517a}\eqref{lem150517a1}.
In particular, we have $IA\cong I/CI\cong \Otimes AI$.
Moreover, we have $I\subsetneq C$; otherwise, the ideal $C=I$ would be 2-generated over $R$, contradicting Theorem~\ref{prop150419a}.
Thus, we have $IA\neq 0$.

According to Remark~\ref{disc150507az}, we need to show that the kernel $K$ of the natural map
$R^2\to(r,s)R$ is cyclic. 
Since $R$ is a finite conductor ring, we know that $K$ is finitely generated. 
Thus, by Nakayama's Lemma, to show that $K$ is cyclic, it suffices to show that $K/CK$ is cyclic over $A$.
Also, Lemma~\ref{lem150517a}\eqref{lem150517a2} implies that $\Tor 1AI=0$.

Consider the exact sequence
$$0\to K\to R^2\to I\to 0$$
and the induced long exact sequence in $\Tor {}A-$:
\begin{equation}\label{eq150520a}
0\to \underbrace{\Otimes AK}_{\cong K/CK}\to A^2\to \underbrace{\Otimes AI}_{\cong IA}\to 0.
\end{equation}
It follows that $K/CK$ is isomorphic to the kernel of the natural map $A^2\to IA$.
If $\eta_2(r)=0$ or $\eta_2(s)=0$, then it is straightforward to show that this kernel $K/CK$ is cyclic.
Otherwise, the fact that $A$ is a GCD domain implies that $K/CK$ is cyclic, again.
We conclude that $R$ is a GCD domain.
Thus, $T$ is a GCD domain as well, by Proposition~\ref{thm150506a'}.
\end{proof}

\begin{thm}\label{thm150412a''}
Assume that the regular conductor square $\left( \square \right)$ satisfies condition (FP)
and every non-zero 2-generated ideal $I$ of $R$ satisfies condition (U1) from Definition~\ref{defn150522aa}.
If $A$ is a generalized GCD ring, then so are $R$ and $T$.
\end{thm}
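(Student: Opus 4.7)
My plan is to apply Glaz's characterization (Remark~\ref{disc150509a'}) to reduce the generalized GCD condition on $R$ to two separate properties: that $R$ is a finite conductor ring and that $R_P$ is a GCD domain for every prime $P \subset R$. Once $R$ is known to be generalized GCD, the corresponding statement for $T$ follows immediately from Proposition~\ref{thm150506a'}.

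The finite conductor property for $R$ is handed to us: since $A$ is generalized GCD, it is a finite conductor ring by Remark~\ref{disc150509a'}, so Theorem~\ref{thm150412a} produces the same conclusion for $R$. For the local GCD property, I would fix a prime $P \subset R$ and pass to the localized conductor square $(\square_P)$ of~\cite[Notation~2.4]{boynton:rpb}, splitting into two cases depending on whether $P$ contains $C$. If $C \not\subseteq P$, then $C_P = R_P$ and the localized square is trivial, so $R_P = T_P$ by Remark~\ref{rmk150615a}; Proposition~\ref{prop150614a} applied to the global hypothesis (U1) says $T$ is a B\'ezout domain, so $R_P = T_P$ is a localization of a B\'ezout domain and hence a GCD domain. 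If $C \subseteq P$, then $\fp := P/C$ is a prime of $A$, so $A_P = A_{\fp}$ is a GCD domain by the hypothesis that $A$ is locally a GCD domain, and I would apply Theorem~\ref{thm150412a'} to the non-trivial square $(\square_P)$, using the fact that $C_P \subseteq P_P$ lies in the Jacobson radical of the local ring $R_P$.

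The main technical hurdle is checking that conditions (FP) and (U1) descend to $(\square_P)$. Flatness and principal generation of the conductor are preserved by localization, so (FP) for $(\square_P)$ is routine. For (U1), I would take any non-zero $2$-generated ideal $J$ of $R_P$, clear denominators to write $J = I R_P$ for some non-zero $2$-generated $I \subseteq R$, invoke the global (U1) to produce $I' \subseteq R$ with $I' \cong I$ and $I'T = T$, and then verify that $I' R_P \cong J$ and $(I' R_P) T_P = T_P$. With these descent arguments in hand, the case analysis above goes through, $R$ satisfies Glaz's criterion, and a final appeal to Proposition~\ref{thm150506a'} completes the theorem.
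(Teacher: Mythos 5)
Your proposal is correct and follows essentially the same route as the paper: reduce via Glaz's characterization to the finite conductor property (from Theorem~\ref{thm150412a}) plus the locally GCD property, handle the latter by localizing the square at a prime $P$ and splitting on whether $C\subseteq P$, and invoke Theorem~\ref{thm150412a'} in the nontrivial case after checking that (FP) and (U1) descend to $(\square_P)$. The only cosmetic difference is that the paper gets the conclusion for $T$ up front from Proposition~\ref{prop150614a} ($T$ is B\'ezout) rather than at the end via Proposition~\ref{thm150506a'}.
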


\begin{proof}
Assume that $A$ is a generalized GCD ring.
Note that $T$ is a B\'ezout domain, hence a generalized GCD domain, by Proposition~\ref{prop150614a} and Remarks~\ref{disc150507az} and~\ref{disc150509a'}.
In particular, Remark~\ref{lem150509a} implies that $R$ is a domain.

From Theorem~\ref{thm150412a}, we conclude that $R$ is a finite conductor ring.
Thus, it remains to show that $R$ is locally a GCD domain. 
Let $P\subset R$ be a prime ideal.
If $R_P\cong T_P$, e.g., if $C\not\subseteq P$, then the fact that $T$ is a generalized GCD ring implies that the localization $T_P\cong R_P$ is a GCD domain.

Assume for the rest of the proof that  $R_P\not\cong T_P$, thus, $C\subseteq P$. 
Consider the regular pullback square $(\square_P)$ from the proof of Lemma~\ref{lem150316a}.
It is straightforward to show that our assumptions on $(\square)$ imply that
$T_P$ is flat as an $R_P$-module, that the conductor ideal $C_P$ is a principal ideal of $T_P$,
and that every 2-generated ideal $I$ of $R_P$ is isomorphic to an ideal $I'\subseteq R_P$ such that $I'T_P=T_P$.
Furthermore, the ideal $C_P$ is contained in the Jacobson radical $P_P$ of $A_P$,  and
the ring $A_P$ is a GCD domain.
Thus, Theorem~\ref{thm150412a'} implies that $R_P$ is a GCD domain, as desired.
\end{proof}

\begin{rmk}\label{disc150507a}
Comparing the previous two results,
one might expect us to have a version of Theorem~\ref{thm150412a'} that does not assume that $C$ is contained in the Jacobson radical of $R$.
However, Example~\ref{ex150513a} below shows that this fails, even in a very nice case.
Note that this comes from~\cite[Example~6.12]{boynton:rpb},
and that the ring $R$ in this example is a generalized GCD domain, by Theorem~\ref{thm150412a''}, that is not a GCD domain.
\end{rmk}

\begin{ex}\label{ex150513a}
Set $T:=\bbq[X]$ and $B:=\bbq[i]$ with $\eta_1\colon T\to B$ the natural surjection.
Set $A:=\bbz[i]$ with $\iota_1\colon A\to B$ the inclusion map.
Note that the pullback determined by this data is of the form $\left(\boxtimes\right)$ from Remark~\ref{disc150513a}:
\begin{gather}
\begin{split}
\xymatrix{
R\ \ar@{^(->}[r]^-{\iota_2} \ar@{->>}[d]_-{\eta_2}
& \bbq[X]\ar@{->>}[d]^-{\eta_1} \\ 
\bbz[i] \ \ar@{^(->}[r]^-{\iota_1} &\bbq[i].}
\end{split}
\tag{$\boxtimes$}
\end{gather}
Moreover, it is straightforward to show that $R=\bbz+\bbz X+(X^2+1)\bbq[X]$;
in other words, the elements of $R$ are precisely the polynomials in $\bbq[X]$ such that,
when one divides by $X^2+1$, the division algorithm yields remainder $bX+c\in\bbz[X]$.
Also, we have $C=(X^2+1)\bbq[X]$.

We claim that $R$ is not a GCD domain.
By~\cite[Theorem 4.2(b)]{fontana:cglcgp}, it suffices to show that the map
$U(\bbq[X])\to \bbq[i]^*/U(\bbz[i])$ induced by $\eta_1$ is not surjective.
(One can also show directly that the ideal $I:=(X+1)R\bigcap(X^2+1)R$ is not principal. 
However, it is shorter to use~\cite{fontana:cglcgp}.)

Since $U(\bbq[X])=\bbq^*$ and $U(\bbz[i])=\{\pm 1\pm i\}$, the induced map in question is the natural one
$\bbq^*\to\bbq[i]^*/\{\pm 1\pm i\}$. Using the norm $N(a+bi)=a^2+b^2$, one checks readily that
the element $(i+i)\{\pm 1,\pm i\}$ is not in the image of this map; essentially, this boils down to the fact that $\sqrt 2\notin\bbq$.
\end{ex}

We continue with more transfer results.

\begin{thm}\label{thm150507ay}
Assume that the regular conductor square $\left( \square \right)$ satisfies condition (FP)
and every non-zero 2-generated ideal $I$ of $R$ satisfies condition (U2) from Definition~\ref{defn150522aa}.
Assume also that $A$ is  a domain.
If $R$ is a GCD domain, then $A$ is a GCD domain; the converse holds if $C$ is contained in the Jacobson radical of $R$.
Furthermore, these conditions imply that $T$ is also a GCD domain.
\end{thm}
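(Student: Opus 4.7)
The plan is to handle the three assertions separately: the converse direction and the $T$-statement are near-immediate consequences of earlier results, while the forward direction $R\text{ GCD}\Rightarrow A\text{ GCD}$ carries the main content. For the converse, if $A$ is a GCD domain and $C$ lies in the Jacobson radical of $R$, then since (U2) implies (U1) by Definition~\ref{defn150522aa}, every hypothesis of Theorem~\ref{thm150412a'} is in force, so $R$ is a GCD domain. The final statement that $T$ is also a GCD domain follows directly (in either direction) from Proposition~\ref{thm150506a'} applied to $R$, since the flatness hypothesis is part of (FP).

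For the substantive direction $R\text{ GCD}\Rightarrow A\text{ GCD}$, by Remark~\ref{disc150507az} it suffices to prove that $aA\cap bA$ is principal for every pair $a,b\in A\ssm\{0\}$. Lift $a=\eta_2(f)$ and $b=\eta_2(g)$ with $f,g\in R\ssm\{0\}$, and set $I:=(f,g)R$. Apply (U2) to produce $r,s\in R$ and an ideal $U\subseteq R$ with $UT=T=rT$ and $I\xra[\cong]{r}rI=sU\xla[\cong]{s}U$. The underlying $R$-module isomorphism $I\to U$ is multiplication by $r/s\in\operatorname{Frac}(R)$, so I select generators $u_1,u_2\in U\subseteq R$ determined by $su_1=rf$ and $su_2=rg$, whence $U=(u_1,u_2)R$. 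A short bookkeeping argument confirms $\ol r,\ol s\neq 0$ in $A$: if $r\in C$ then $rT=T$ forces $T=C$, contradicting non-triviality via Remark~\ref{rmk150615a}; if $s\in C$ then $rf=su_1\in C$ gives $\ol r\,a=0$ in the domain $A$, leading to the same contradiction. It follows that $u_1,u_2$ and $\ol u_1,\ol u_2$ are all non-zero.

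Now apply the GCD hypothesis to $U$: since $R$ is a GCD domain with $u_1,u_2\neq 0$, the syzygy $K_U:=\ker(R^2\to U)$ is isomorphic to the principal ideal $u_1R\cap u_2R$ and is in particular cyclic. Lemma~\ref{lem150517a}\eqref{lem150517a2} gives $\Tor 1AU=0$, so tensoring $0\to K_U\to R^2\to U\to 0$ with $A$ yields the short exact sequence
$$0\to K_U/CK_U\to A^2\to UA\to 0,$$
in which $K_U/CK_U$ is cyclic over $A$. Lemma~\ref{lem150517a}\eqref{lem150517a1} identifies $UA=U/C=(\ol u_1,\ol u_2)A\subseteq A$. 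Because $A$ is a domain with $\ol u_1,\ol u_2\neq 0$, the kernel $K_U/CK_U$ is isomorphic to $\ol u_1A\cap\ol u_2A$, which is therefore a principal ideal of $A$, say $\nu A$.

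It remains to transfer this conclusion from $U$ back to the pair $(a,b)$. The relations $\ol s\,\ol u_j=\ol r\,a_j$ (for $a_1=a$, $a_2=b$), together with $\ol r,\ol s\neq 0$ in the domain $A$, yield $\ol u_1A\cap\ol u_2A=(\ol r/\ol s)(aA\cap bA)$ inside $\operatorname{Frac}(A)$; hence $aA\cap bA=(\ol s\nu/\ol r)A$ as a fractional ideal of $A$. Since $aA\cap bA\subseteq A$ and any $\xi\in\operatorname{Frac}(A)$ with $\xi A\subseteq A$ must satisfy $\xi=\xi\cdot 1\in A$, the generator $\ol s\nu/\ol r$ lies in $A$, so $aA\cap bA$ is principal, completing the proof that $A$ is a GCD domain. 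The main obstacle, I expect, is the bookkeeping of the non-vanishing conditions on $\ol r,\ol s,\ol u_1,\ol u_2$ and the clean handling of the fractional-ideal manipulations in $\operatorname{Frac}(A)$: both are routine but must be treated carefully, since $I\cong U$ is only a module isomorphism, not an identification of ideals, so the natural comparison between $(a,b)A$ and $(\ol u_1,\ol u_2)A$ takes place inside $\operatorname{Frac}(A)$ rather than directly inside $A$.
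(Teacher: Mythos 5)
Your proof is correct and follows essentially the same route as the paper's: lift $(a,b)$ to a $2$-generated ideal $I$ of $R$, pass to the unitary ideal $U$ via (U2), use Lemma~\ref{lem150517a} to tensor the syzygy sequence down to $A$, and exploit that $A$ is a domain to transfer principality back to $(a,b)A$. The only cosmetic difference is that the paper tracks kernels of the presentations $A^2\to J$ through its Cases 1--2, while you phrase the final transfer as a fractional-ideal computation in $\operatorname{Frac}(A)$; both reductions rest on the same identification from Remark~\ref{disc150507az}.
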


\begin{proof}
Again, it suffices to assume that $R$ is a GCD domain and
show that $A$ is a GCD domain. Let $J=(a,b)A$ with $a,b\neq 0$, and consider the natural map $A^2\to J$.
We need to show that the kernel $K$ of this map is cyclic.

As in our previous proofs, let $f,g\in R$ be such that $\eta_2(f)=a$ and $\eta_2(g)=b$. 
Set $I=(f,g)R$, and consider the natural map $R^2\to I$.
Since $R$ is a GCD domain, the kernel $L$ of this map is cyclic over $R$.
Lemma~\ref{lem150517a}\eqref{lem150517a2} implies that $\Tor 1AI=0$.
Thus, applying $\Otimes A-$ to the short exact sequence
$$0\to L\to R^2\to I\to 0$$
yields another short exact sequence
$$0\to L/CL\to A^2\to I/CI\to 0.$$
It follows that  the kernel of the induced map $A^2\to I/CI$ is isomorphic to $L/CL$, which is cyclic. 

Case 1: $IT=T$. Then Lemma~\ref{lem150517a}\eqref{lem150517a1} implies that
$CI=C$, so the ideal 
$I/C$ is equal to $J$.
Thus,  the previous paragraph shows that $K\cong L/CL$, which is cyclic. 

Case 2:  $I$ is isomorphic to an ideal $I'$ such that $I'T=T$ via a multiplication map
$I'\xra[\cong]h I$  for a fixed $h\in R$.
In particular, $I'$ is 2-generated,
by elements $f',g'\in I'$ such that $f=f'h$ and $g=g'h$.
Set $a'=\eta_2(f')$ and $b'=\eta_2(g)$ and $x=\eta_2(h)$.
As in the proof of Theorem~\ref{thm150506c},
we have $a=xa'$
In particular, as $a\neq 0$, we have $x\neq 0$.

Since the ring $A$ is a domain, and  $x$ is non-zero,
the map $J':=(a',b')A\xra x(a,b)A=:J$ 
is injective;
it is surjective by construction. 
Since $I'T=T$, Case 1 implies that the kernel $K'$ of the natural map $A^2\to J'$ is cyclic.
Hence, the same is true for $K\cong K'$. This completes Case 2.

Now, we complete the proof.
By assumption, there are elements $r,s\in R$ and an ideal $U\subseteq R$ such that $UT=T=rT$ and
$I\xra[\cong] r rI=sU\xla[\cong] sU$. Case 2 implies that the kernel of the natural map $A^2\to rIA=sUA$ is cyclic.
As in the proof of Theorem~\ref{thm150506c}, the map $J=IA\xra r rIA$ is an isomorphism, so the kernel of the natural map $A^2\to J$ is cyclic as well,
as desired.
\end{proof}

\begin{thm}\label{thm150506cx}
Assume that the regular conductor square $\left( \square \right)$ satisfies condition (FP)
and every non-zero 2-generated ideal $I$ of $R$ satisfies condition (U2) from Definition~\ref{defn150522aa}.
Assume further that $A$ is  locally a domain and has finite Krull dimension.
Then $R$ is a generalized GCD ring if and only if $A$ is a generalized GCD ring; these conditions imply that $T$ is a generalized GCD ring.
\end{thm}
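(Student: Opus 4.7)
The plan is to adapt the strategy of Theorems~\ref{thm150506c}, \ref{thm150412a''}, and~\ref{thm150507ay}, handling the two directions of the equivalence separately and relying on Glaz's characterization from Remark~\ref{disc150509a'} that $R$ is a generalized GCD ring if and only if it is a finite conductor ring that is locally a GCD domain.

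For the descent direction, suppose $A$ is a generalized GCD ring. Since condition (U2) implies condition (U1), Theorem~\ref{thm150412a''} applies directly and yields that both $R$ and $T$ are generalized GCD rings. This handles the ``if'' direction and also disposes of the final claim about $T$ in this case.

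For the ascent direction, suppose $R$ is a generalized GCD ring, so $R$ is a finite conductor ring that is locally a GCD domain. The present hypotheses are exactly those of Theorem~\ref{thm150506c}, which immediately gives that $A$ is a finite conductor ring. It remains to show $A$ is locally a GCD domain. Fix a prime $\fp \subset A$, and let $P\subset R$ denote its contraction, so $C\subseteq P$ and $A_{\fp}=A_P$. Since $A$ is locally a domain, $A_P$ is a domain, and since $R$ is locally a GCD domain, $R_P$ is a GCD domain. Localize the given square at $P$ to obtain the regular conductor square $(\square_P)$ with conductor $C_P$, as in~\cite[Notation 2.4]{boynton:rpb}. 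The conditions (FP) and (U2) transfer to $(\square_P)$: flatness of $T_P$ over $R_P$ and principality of $C_P$ in $T_P$ localize directly, and for (U2), any non-zero 2-generated ideal of $R_P$ has the form $JR_P$ for some non-zero 2-generated ideal $J$ of $R$ (after clearing denominators), so the witnesses $r,s\in R$ and $U\subseteq R$ for $J$ localize to witnesses for $JR_P$ in $R_P$. Applying the forward direction of Theorem~\ref{thm150507ay} to $(\square_P)$ (using that $A_P$ is a domain) yields that $A_{\fp}=A_P$ is a GCD domain. Hence $A$ is locally a GCD domain, so $A$ is a generalized GCD ring.

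Finally, when either equivalent condition holds, $R$ is a generalized GCD ring, and Proposition~\ref{thm150506a'} (which requires only flatness of $T$ over $R$) ensures that $T$ is likewise a generalized GCD ring.

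The main obstacle is the localization verification for condition (U2) and the careful identification of every non-zero 2-generated ideal of $R_P$ as the extension of a non-zero 2-generated ideal of $R$, together with tracking of the isomorphism chain $I\xra[\cong]{r} rI=sU\xla[\cong]{s} U$ through localization. Although routine, this is the most delicate bookkeeping step, and it is essential in order to legitimately cite Theorem~\ref{thm150507ay} locally at each prime of $A$.
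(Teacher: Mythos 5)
Your proposal is correct and follows essentially the same route as the paper: descent via Theorem~\ref{thm150412a''} (using (U2)$\implies$(U1)), and ascent by combining Theorem~\ref{thm150506c} for the finite conductor property with a localization at each prime of $A$ and an appeal to Theorem~\ref{thm150507ay}, exactly as the paper does. The localization bookkeeping you flag is the same step the paper dismisses as straightforward in the proof of Theorem~\ref{thm150412a''}, and your sketch of it is sound.
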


\begin{proof}
Again, we assume that $R$ is a generalized GCD ring and
show that $A$ is a generalized GCD ring.
Theorem~\ref{thm150506c} shows that $A$ is a finite conductor ring.
Thus, it remains to let $\fp$ be a prime ideal of $A$ and show that $A_{\p}$ is a GDC domain.
Note that $A_{\p}$ is a domain by assumption.

Set $P:=\nu_2^{-1}(\p)$ and consider the localized square $(\square_P)$ from Lemma~\ref{lem150316a}.
By assumption, $R_P$ is a GDC domain, so Theorem~\ref{thm150507ay} implies that 
the domain $A_\p\cong A_P$ is a GDC domain, as desired.
\end{proof}

\begin{thm}\label{thm150507ax}
Assume that the regular conductor square $\left( \square \right)$ satisfies condition (FP)
and every non-zero 2-generated ideal $I$ of $R$ satisfies condition (U2) from Definition~\ref{defn150522aa}.
Assume further that $A$ is a (possibly infinite) product $\prod_{\lambda\in\Lambda}A_\lambda$ of domains.
If $R$ is a generalized GCD ring, then each of the following rings is a generalized GCD ring: $T$, $A$, and $A_\lambda$.
Conversely, if $A$ is a generalized GCD ring, then each of the following rings is a generalized GCD ring: $T$, $R$, and $A_\lambda$.
\end{thm}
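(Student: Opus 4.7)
The plan is to combine the product-based transfer result for finite conductor rings (Theorem~\ref{thm150507a}) with the characterization of generalized GCD rings from Remark~\ref{disc150509a'} (finite conductor plus locally a GCD domain), arguing prime-by-prime in the style of Theorem~\ref{thm150506cx}. Two preliminary facts will simplify the bookkeeping. First, Proposition~\ref{prop150614a} gives that $T$ is a B\'ezout domain, hence a generalized GCD ring by Remark~\ref{disc150509a'}, independently of any hypothesis on $R$ or $A$. Second, each factor $A_\lambda$ is a localization of $A$ via the $\lambda$-coordinate idempotent $e_\lambda$; since both conditions defining ``generalized GCD'' (finite conductor and locally GCD) pass to localizations, $A_\lambda$ is a generalized GCD ring whenever $A$ is.

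For the converse direction (assuming $A$ is generalized GCD), most of the work is already packaged. Since condition (U2) implies (U1), Theorem~\ref{thm150412a''} applies directly to produce $R$ as a generalized GCD ring, and the preliminary observations handle $T$ and each $A_\lambda$.

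The forward direction (assuming $R$ is generalized GCD) is the substance of the proof. First I would apply Theorem~\ref{thm150507a} to deduce that $A$ is a finite conductor ring, and then show that $A$ is locally a GCD domain. Fixing a prime $\p \subset A$ and its contraction $P := \eta_2^{-1}(\p) \subset R$, the localized square $(\square_P)$ from Lemma~\ref{lem150316a} is again a flat regular conductor square; condition (FP) is inherited directly, and condition (U2) is inherited because every 2-generated ideal of $R_P$ arises (up to isomorphism) as the localization of a 2-generated ideal of $R$. Since $R_P$ is a GCD domain (as a localization of the generalized GCD ring $R$) and $A_P \cong A_\p$ is a domain, Theorem~\ref{thm150507ay} applied to $(\square_P)$ forces $A_\p$ to be a GCD domain. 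Thus $A$ is generalized GCD, and the preliminary observations produce the remaining conclusions for $T$ and each $A_\lambda$.

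The main subtlety, and the step I expect to be the chief obstacle in the write-up, is verifying that $A_\p$ is a domain for every prime $\p$ of the possibly-infinite product $A = \prod_{\lambda \in \Lambda} A_\lambda$ of domains. For ``coordinate'' primes the assertion is immediate, but for primes of $A$ arising from non-principal ultrafilters on $\Lambda$ one must invoke that ultraproducts of domains are domains to see that the localization kills $A$ down to a domain. A secondary routine check is that conditions (FP) and (U2) on $(\square)$ do localize cleanly to $(\square_P)$.
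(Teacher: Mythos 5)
Your proposal is correct and follows essentially the same route as the paper: deduce that $A$ is a finite conductor ring via Theorem~\ref{thm150507a}, then localize at each prime $\p\subset A$ and apply Theorem~\ref{thm150507ay} to the localized square $(\square_P)$, with the converse handled by Theorem~\ref{thm150412a''}. The only difference is at the step you flag as the chief subtlety: where you invoke ultraproducts of domains, the paper instead observes that every principal ideal of a product of domains is isomorphic to a direct summand, hence flat, so $A$ is locally a domain by \cite[Theorem 4.2.2]{glaz:ccr} --- a lighter justification of the same fact.
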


\begin{proof}
Note that $A$ is locally a domain. Indeed, since $A$ is a product of domains, it is straightforward to show that every principal ideal
of $A$ is a summand of $A$, hence projective, hence flat; now apply~\cite[Theorem 4.2.2]{glaz:ccr}.

Now, argue as in the proof of Theorem~\ref{thm150506cx}, using Theorem~\ref{thm150507a} instead of Theorem~\ref{thm150506c}.
\end{proof}

\begin{cor}\label{cor150517a'}
Consider the regular conductor square $(\boxtimes)$ from Remark~\ref{disc150513a}.
Then $R$ is a generalized GCD ring
if and only if each $A_i$ is so.
\end{cor}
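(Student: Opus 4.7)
The plan is to reduce the corollary to Theorem~\ref{thm150507ax} together with the routine fact that the generalized GCD property is preserved by finite products of domains. By Remark~\ref{disc150513a}, the ring $T=K[X]$ is a PID and a localization of $R$, so the square $(\boxtimes)$ satisfies condition (FP) and every non-zero finitely generated ideal of $R$ satisfies condition (U2) from Definition~\ref{defn150522aa}. Since $A=A_1\times\cdots\times A_r$ is a finite product of domains, all hypotheses of Theorem~\ref{thm150507ax} are met.

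For the forward direction, if $R$ is a generalized GCD ring, then Theorem~\ref{thm150507ax} immediately gives that each factor $A_i$ is a generalized GCD ring.

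For the reverse direction, assume that each $A_i$ is a generalized GCD ring. By Theorem~\ref{thm150507ax}, it suffices to verify that the finite product $A=\prod_{i=1}^r A_i$ is itself a generalized GCD ring, and then conclude that $R$ is. The preservation of the generalized GCD property under finite products is the step I would carry out explicitly: any principal ideal of $A$ has the form $\prod_{i=1}^r (a_i)A_i$, each factor is either $0$ or isomorphic to $A_i$ (since $A_i$ is a domain), so the principal ideal is a direct summand of $A$ and hence projective; any finitely generated (respectively, flat) ideal of $A$ is a product of finitely generated (respectively, flat) ideals of the factors; and intersection commutes with the finite product decomposition, so an intersection of two finitely generated flat ideals of $A$ is the product of such intersections in the $A_i$, which is finitely generated and flat in each factor by assumption, and therefore in $A$.

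The argument is essentially a direct appeal to Theorem~\ref{thm150507ax}; the only point requiring attention is the elementary product decomposition above, and once that is in hand the corollary follows. I do not anticipate any serious obstacle beyond making the product-decomposition bookkeeping precise.
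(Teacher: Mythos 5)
Your proposal is correct and matches the paper's (unwritten) argument: the paper proves this corollary exactly as it proves Corollary~\ref{cor150517a}, by citing Remark~\ref{disc150513a} to verify the hypotheses of Theorem~\ref{thm150507ax} and then applying that theorem. The finite-product step you spell out (each $A_i$ generalized GCD implies $A=\prod_i A_i$ generalized GCD) is exactly the routine fact the paper defers to the parenthetical remark in Remark~\ref{rmk150508a}, and your decomposition argument for it is sound.
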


\begin{cor}\label{cor150517b'}
Let $D$ be a domain, and let $E\subseteq D$ be a finite subset.
Then the ring of integer-valued polynomials
$\operatorname{Int}(E,D)$ is a generalized GCD ring
if and only if $D$ is so.
\end{cor}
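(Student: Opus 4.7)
The plan is to mimic the proof of Corollary~\ref{cor150517b} verbatim, substituting the generalized GCD version of the main pullback result (Corollary~\ref{cor150517a'}) for its finite conductor / coherent counterpart. The key observation is purely structural: the ring $\operatorname{Int}(E,D)$ is already built as a specific instance of the conductor square $(\boxtimes)$ from Remark~\ref{disc150513a}.

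Concretely, I would let $K$ denote the field of fractions of $D$, set $r:=|E|$, and write $E=\{e_1,\dots,e_r\}$. Taking $T:=K[X]$, $C:=(X-e_1)\cdots(X-e_r)K[X]$, $B:=T/C\cong K^r$, and $A:=D^r$ with the diagonal inclusion $\iota_1\colon D^r\into K^r$ and the evaluation surjection $\eta_1\colon K[X]\onto K^r$ given by $f\mapsto(f(e_1),\dots,f(e_r))$, the resulting pullback is $R=\operatorname{Int}(E,D)$. Thus one has the conductor square
$$
\xymatrix{
R\ \ar@{^(->}[r]^-{\iota_2} \ar@{->>}[d]_-{\eta_2}
& K[X]\ar@{->>}[d]^-{\eta_1} \\
D^r \ \ar@{^(->}[r]^-{\iota_1} &K^r}
$$
which is a special case of $(\boxtimes)$ where every factor $A_i$ equals $D$.

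Now I would invoke Corollary~\ref{cor150517a'}, which applies because $(\boxtimes)$ satisfies the hypotheses of Theorem~\ref{thm150507ax} (this is what Remark~\ref{disc150513a} ensures: $K[X]$ is a PID and a localization of $R$, so conditions (FP) and (U2) hold for all non-zero finitely generated ideals, and $A=D^r$ is a finite product of domains). The corollary gives that $R=\operatorname{Int}(E,D)$ is a generalized GCD ring if and only if each $A_i=D$ is a generalized GCD ring. Since all factors are the same ring $D$, this immediately yields the stated equivalence.

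I do not anticipate a main obstacle here; the work is entirely encapsulated in the preceding machinery, and the proof reduces to identifying $\operatorname{Int}(E,D)$ with the correct pullback and citing Corollary~\ref{cor150517a'}. The only point worth a sentence of care is observing that $r$ may be zero or one, but in those degenerate cases the statement is either vacuous or reduces to $R=D$, so no separate argument is needed.
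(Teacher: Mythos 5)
Your proposal is correct and is essentially the paper's own (implicit) argument: identify $\operatorname{Int}(E,D)$ with the special case of $(\boxtimes)$ in which $r=|E|$ and every $A_i=D$, then cite Corollary~\ref{cor150517a'}, exactly as the paper does for the finite conductor and coherent cases in Corollary~\ref{cor150517b}. The only blemish is your closing aside: for $r=1$ one gets $R=D+(X-e_1)K[X]$, not $R=D$, but this case is already covered by the general machinery, so no separate treatment is needed there either.
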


\section{Quasi-coherent Rings}
\label{sec150612c}

We conclude with an investigation of the following property.

\begin{defn}\label{notn150509a''}
The ring $R$ is \emph{quasi-coherent} if every ideal of the form $(0:_Ra)$ is finitely generated, as is every intersection of finitely many principal ideals.
\end{defn}

\begin{disc}\label{disc150509a''}
B\'ezout domain $\implies$ coherent $\implies$ quasi-coherent $\implies$ finite conductor ring, and
GCD domain $\implies$ generalized GCD ring $\implies$ quasi-coherent.
Also, if $R$ is a finite product of domains, then every ideal of the form $(0:_Ra)$ is principal, hence finitely generated
\end{disc}

\begin{prop}\label{thm150506a''}
Consider the regular conductor square $\left( \square \right)$. Assume that $T$ is flat as an $R$-module. 
If $R$ is quasi-coherent, then $T$ is as well.
\end{prop}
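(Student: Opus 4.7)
My plan is to follow the same template used in the proof of Proposition~\ref{thm150506a} for finite conductor and coherent rings: exploit flatness of $T$ over $R$ together with the existence of a $T$-regular element $c \in C$ to reduce arbitrary elements of $T$ to elements of $R$, then use finite generation over $R$ to deduce finite generation over $T$. I will verify the two defining conditions of quasi-coherence separately.

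For the annihilator condition, fix $t \in T$ and pick a $T$-regular element $c \in C$. Since $c$ is $T$-regular, one checks that $(0:_T t) = (0:_T ct)$, and $ct \in R$ since $C \subseteq R$. Tensoring the defining exact sequence $0 \to (0:_R ct) \to R \xrightarrow{ct} R$ with $T$ over $R$ and using flatness of $T$ yields $(0:_R ct) \otimes_R T \cong (0:_T ct) = (0:_T t)$. Since $R$ is quasi-coherent, $(0:_R ct)$ is finitely generated over $R$, so the right-hand side is finitely generated over $T$.

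For the intersection condition, by induction it suffices to treat two principal ideals $tT$ and $uT$. Again choose a $T$-regular $c \in C$; one verifies that $c(tT \cap uT) = ctT \cap cuT$ and that multiplication by $c$ is an isomorphism of $tT \cap uT$ onto $ctT \cap cuT$. So we may replace $t,u$ by $ct, cu$ and assume $t, u \in R$. Then the key is to identify $tT \cap uT$ with $(tR \cap uR) \otimes_R T$. I will do this by writing $tR \cap uR$ as the kernel of the natural map $R \to (R/tR) \oplus (R/uR)$, tensoring with $T$ (flat), and using that $(tR) \otimes_R T \cong tT$ and similarly for $u$, so the induced sequence identifies $(tR \cap uR) \otimes_R T$ with $\ker(T \to (T/tT) \oplus (T/uT)) = tT \cap uT$. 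Quasi-coherence of $R$ makes $tR \cap uR$ finitely generated over $R$, and finite generation ascends along the tensor product, yielding finite generation of $tT \cap uT$ over $T$.

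The main obstacle, such as it is, is the compatibility of flat base change with intersections in the second step; but since this is a standard consequence of flatness applied to the kernel presentation above, no real difficulty arises. Everything else is a direct analogue of the reductions already appearing in the proof of Proposition~\ref{thm150506a}.
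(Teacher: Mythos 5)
Your proof follows essentially the same route as the paper's: reduce to elements of $R$ by multiplying by a $T$-regular element $c\in C$, then use flatness of $T$ to transport finite generation from $R$ to $T$; the annihilator half is verbatim the paper's argument. One step, however, is not valid as stated: for the intersection condition you claim that ``by induction it suffices to treat two principal ideals.'' Quasi-coherence requires $\bigcap_{i=1}^n t_iT$ to be finitely generated for every $n$, and knowing this for $n=2$ does not set up an induction, because $t_1T\cap t_2T$ need only be finitely generated, not principal, so $(t_1T\cap t_2T)\cap t_3T$ is no longer an intersection of principal ideals and the $n=2$ case does not apply to it. Fortunately the repair is immediate and costs nothing: run your argument for all $n$ at once. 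Multiplication by $c$ gives $\bigcap_i t_iT\cong\bigcap_i (ct_i)T$ with each $ct_i\in R$, and tensoring the exact sequence $0\to\bigcap_i (ct_i)R\to R\to\bigoplus_i R/(ct_i)R$ with the flat module $T$ identifies $\bigl(\bigcap_i (ct_i)R\bigr)\otimes_RT$ with $\bigcap_i (ct_i)T$, exactly as in your two-ideal version. Since $R$ is quasi-coherent, $\bigcap_i (ct_i)R$ is finitely generated over $R$, hence its extension is finitely generated over $T$. With that adjustment your proof is complete and coincides with the paper's.
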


\begin{proof}
Assume that $R$ is quasi-coherent. 
As in the proof of Proposition~\ref{thm150506a'}, one checks readily that the intersection of finitely many principal ideals of $T$ is principal.
Next, let $t\in T$, and consider the ideal $(0:_Tt)$. 
Let $c\in C$ be $T$-regular. It is straightforward to show that $(0:_Tct)=(0:_Tt)$,
so we assume without loss of generality that $t\in R$. 
Since $R$ is quasi-coherent, the ideal $(0:_Rt)$ is finitely generated over $R$.
By flatness, the ideal
$(0:_Tt)=(0:_Rt)T$ is finitely generated over $T$, as desired.
\end{proof}

\begin{disc}\label{disc150602a}
By~\cite[Proposition 2.4]{glaz:fcrzd}, a domain $R$ is quasi-coherent if and only if for each finitely generated ideal $I$
the module $\Hom IR\cong(R:_{Q(R)}I)$ is finitely generated. 
We  generalize this next for finite products of domains.
\end{disc}

\begin{lem}\label{lem150602b}
Let $D_1,\ldots,D_r$ be domains, and set $R=\prod_{i=1}^rD_r$. Then the following conditions are equivalent. 
\begin{enumerate}[\rm(i)]
\item \label{lem150602b1}
The ring $R$ is quasi-coherent.
\item \label{lem150602b2}
Each ring $D_i$ is quasi-coherent.
\item \label{lem150602b3}
For each finitely generated ideal $I$ of $R$, the module $\Hom IR$ is finitely generated over $R$.
\end{enumerate}
\end{lem}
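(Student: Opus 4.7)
The plan is to exploit the fact that all the relevant data for a finite product of rings decomposes componentwise. Writing $e_1, \ldots, e_r$ for the orthogonal idempotents corresponding to the factors of $R = \prod_{i=1}^r D_i$, every ideal $I \subseteq R$ decomposes as $I = \prod_{i=1}^r I_i$ with $I_i := e_i I$ an ideal of $D_i$. Under this decomposition, one checks readily that $(0 :_R a) = \prod_i (0 :_{D_i} a_i)$ for $a = (a_i)$, and $\bigcap_{j=1}^n a_j R = \prod_i \bigcap_{j=1}^n a_{j,i} D_i$ for finitely many principal ideals. Moreover, since $r$ is finite, an ideal $\prod_i I_i$ is finitely generated over $R$ if and only if each $I_i$ is finitely generated over $D_i$ (take images under the projections $e_i$, and conversely combine generators with the idempotents). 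This yields the equivalence \eqref{lem150602b1}$\iff$\eqref{lem150602b2}.

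For \eqref{lem150602b2}$\iff$\eqref{lem150602b3}, I would first establish the key identity
\[
\Hom[R]{I}{R} \cong \prod_{i=1}^r \Hom[D_i]{I_i}{D_i}.
\]
The point is that for $i \neq j$ the idempotent $e_j$ annihilates $I_i$, so any $R$-linear map $I_i \to R$ takes values in $e_i R = D_i$, giving $\Hom[R]{I_i}{R} = \Hom[D_i]{I_i}{D_i}$; combining this with the decomposition $I = \bigoplus_i I_i$ produces the stated product formula. Again, since $r$ is finite, $\Hom[R]{I}{R}$ is finitely generated over $R$ precisely when each $\Hom[D_i]{I_i}{D_i}$ is finitely generated over $D_i$. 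Furthermore, as $I$ ranges over the finitely generated ideals of $R$, the factors $I_i$ range over all finitely generated ideals of $D_i$ (and vice versa).

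To finish, I invoke Remark~\ref{disc150602a}, which characterizes quasi-coherence of a domain $D_i$ by the finite generation of $\Hom[D_i]{I_i}{D_i}$ for every finitely generated ideal $I_i$ of $D_i$. Combining this characterization with the two observations above gives \eqref{lem150602b2}$\iff$\eqref{lem150602b3}, completing the chain of equivalences.

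The only mildly subtle step is the Hom-decomposition and the verification that \textquotedblleft finitely generated over $R$\textquotedblright{} corresponds to \textquotedblleft finitely generated in each component over $D_i$\textquotedblright; both rely critically on $r$ being finite, since an infinite product of rings would fail to share these properties (as already noted in Remark~\ref{rmk150508a} and illustrated by Example~\ref{ex150508a}). Everything else is a routine componentwise check.
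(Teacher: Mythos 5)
Your proposal is correct and follows essentially the same route as the paper's proof: both decompose ideals and Hom-modules componentwise over the finite product, observe that finite generation over $R$ is equivalent to finite generation in each factor, and invoke Remark~\ref{disc150602a} to translate condition~(iii) into quasi-coherence of each domain $D_i$. Your explicit treatment of the annihilator ideals $(0:_Ra)$ is a harmless addition; the paper handles this point via the observation that such annihilators are automatically principal in a finite product of domains.
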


\begin{proof}
Recall that every ideal $I$ of $R$ decomposes uniquely as $I=\prod_{i=1}^rI_i$ with each $I_i$ an ideal of $D_i$.
Moreover, the ideal $I$ is finitely generated over $R$ if and only each $I_i$ is finitely generated over $D_i$.
Also, given $D_i$-modules $M_i$ and $N_i$, with $M=\prod_{i=1}^rM_i$ and $N=\prod_{i=1}^rN_i$,
there is a natural isomorphism $\Hom MN\cong\prod_{i=1}^r\Hom[D_i]{M_i}{N_i}$ over $R$.

\eqref{lem150602b1}$\implies$\eqref{lem150602b2}
Assume that $R$ is quasi-coherent, and let $i\in\{1,\ldots,r\}$ be given.
Let $\{I_{i,j}\}_{j=1}^{n}$ be a finite set of principal ideals of $D_i$. 
For all $p\in\{1,\ldots,r\}\ssm\{i\}$ and all $j=1,\ldots,n$ set $I_{p,j}=R_p$, and set $I_j=\prod_{p=1}^rI_{p,j}$ which is a principal ideal of $R$. 
Since $R$ is quasi-coherent, the intersection $\bigcap_{j=1}^nI_j=\prod_{p=1}^r\left(\bigcap_{j=1}^nI_{p,j}\right)$ is finitely generated over $R$,
and it follows that the factor $\bigcap_{j=1}^nI_{i,j}$ is finitely generated over $D_i$, as desired.

\eqref{lem150602b2}$\implies$\eqref{lem150602b1}
Assume that each domain $D_i$ is quasi-coherent, and let $I_1,\ldots,I_n$ be principal ideals of $R$.
Since each ideal $I_j$ is of the form $I_j=\prod_{i=1}^rI_{i,j}$ where each $I_{i,j}$ is a principal ideal of $D_i$, 
the fact that $D_i$ is quasi-coherent implies that each intersection $\bigcap_{j=1}^nI_{i,j}$ is finitely generated over $D_i$,
so the intersection $\bigcap_{j=1}^nI_j=\prod_{p=1}^r\left(\bigcap_{j=1}^nI_{p,j}\right)$ is finitely generated over $R$.

\eqref{lem150602b2}$\implies$\eqref{lem150602b3}
Assume that each domain $D_i$ is quasi-coherent, and let $I=\prod_{i=1}^rI_i$ be a finitely generated ideal of $R$.
Then each $I_i$ is finitely generated over the quasi-coherent domain $D_i$, so Remark~\ref{disc150602a} implies that
$\Hom[D_i]{I_i}{D_i}$ is finitely generated over $D_i$.
It follows that the finite product
$\prod_{i=1}^r\Hom[D_i]{I_i}{D_i}\cong\Hom IR$ is finitely generated over $R$, as desired.

\eqref{lem150602b3}$\implies$\eqref{lem150602b2}
Assume that for each finitely generated ideal $I$ of $R$, the module $\Hom IR$ is finitely generated over $R$.
To show that $D_i$ is quasi-coherent, let $I_i$ be a finitely generated ideal of $D_i$.
For all $\p\in\{1,\ldots,n\}\ssm\{i\}$ set $I_p=D_p$, and set $I=\prod_{p=1}^nI_p$.
This ideal is finitely generated over $R$, so the module $\prod_{p=1}^r\Hom[D_p]{I_p}{D_p}\cong\Hom IR$ is finitely generated over $R$, by assumption.
It follows that the factor $\Hom[D_i]{I_i}{D_i}$ is finitely generated over $D_i$,
and Remark~\ref{disc150602a} implies that $D_i$ is quasi-coherent, as desired.
\end{proof}

\begin{disc}\label{disc150602b}
Notice that some implications in the previous result hold more generally than we have stated.
For instance, our proof readily shows that the implication~\eqref{lem150602b1}$\implies$\eqref{lem150602b2} holds for arbitrary products of rings that are not necessarily domains,
and the converse holds for finite products of rings that are not necessarily domains.
However, we are primarily interested in using the equivalence~\eqref{lem150602b1}$\iff$\eqref{lem150602b3} which seems to need $R$ to be a finite product of domains,
as we have assumed in the lemma.
\end{disc}

\begin{thm}\label{thm150601a}
Assume that the regular conductor square $\left( \square \right)$ satisfies condition (FP)
and every non-zero finitely generated ideal $I$ of $R$ satisfies condition (U1) from Definition~\ref{defn150522aa}.
Assume that $A$ is a finite product of domains.
If $A$ is  quasi-coherent, then $R$ and $T$ are quasi-coherent as well.
\end{thm}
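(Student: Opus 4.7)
The plan is to deduce the quasi-coherence of $R$ first; the quasi-coherence of $T$ then follows from Proposition~\ref{thm150506a''}.

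First, under the standing hypotheses, Proposition~\ref{prop150614a} implies that $T$ is a B\'ezout domain, so the subring $R \subseteq T$ is also a domain. Consequently $(0 :_R a) = 0$ for every nonzero $a \in R$, handling the annihilator condition in the definition of quasi-coherence. For the intersection condition, Remark~\ref{disc150602a} applied to the domain $R$ reduces the problem to showing $\Hom{I}{R}$ is finitely generated over $R$ for every finitely generated ideal $I$ of $R$.

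Let $I$ be a nonzero finitely generated ideal of $R$. Condition (U1) lets me replace $I$ by an isomorphic ideal with $I'T = T$, leaving $\Hom{I}{R}$ unchanged up to isomorphism; so assume $IT = T$. Lemma~\ref{lem150517a}\eqref{lem150517a1} then gives $IA \cong I/C$, a finitely generated ideal of $A$, and Lemma~\ref{lem150602a} yields a short exact sequence
\begin{equation*}
0 \to C \cdot \Hom{I}{R} \to \Hom{I}{R} \to \Hom[A]{IA}{A} \to 0,
\end{equation*}
together with the explicit identification $C \cdot \Hom{I}{R} = \{\lambda_x : x \in C\}$, where $\lambda_r : I \to R$ is multiplication by $r$ and $\lambda_1$ is the inclusion $I \hookrightarrow R$. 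Since $A$ is a finite product of quasi-coherent domains, Lemma~\ref{lem150602b} ensures $\Hom[A]{IA}{A}$ is finitely generated over $A$.

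The main obstacle is that $C \cdot \Hom{I}{R} \cong C$ is \emph{not} finitely generated over $R$ by Theorem~\ref{prop150419a}, so finite generation of the middle term does not follow formally from that of the outer terms. The key trick I would employ is to arrange a generating set of $\Hom[A]{IA}{A}$ over $A$ that includes the natural inclusion $\bar\iota : IA \hookrightarrow A$ as one of its generators; lift $\bar\iota$ specifically to $\lambda_1 \in \Hom{I}{R}$, and lift the remaining generators $\bar f_2, \ldots, \bar f_m$ to any $f_2, \ldots, f_m \in \Hom{I}{R}$. A standard lift-and-reduce argument then yields
\begin{equation*}
\Hom{I}{R} = R \lambda_1 + R f_2 + \cdots + R f_m + C \cdot \Hom{I}{R}.
\end{equation*}
Because every element of $C \cdot \Hom{I}{R}$ has the form $\lambda_x = x \lambda_1$ with $x \in C \subseteq R$, the final summand is already absorbed into $R \lambda_1$. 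Dropping it leaves $\Hom{I}{R} = R \lambda_1 + R f_2 + \cdots + R f_m$, which is finitely generated over $R$, completing the proof.
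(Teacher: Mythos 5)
Your proof is correct, and the endgame differs genuinely from the paper's. Both arguments make the same reductions: $R$ and $T$ are domains via Proposition~\ref{prop150614a}, so the annihilator condition is automatic and Remark~\ref{disc150602a} reduces everything to finite generation of $\Hom IR$; condition (U1) reduces to the case $IT=T$; and Lemma~\ref{lem150602a} together with Lemma~\ref{lem150602b} shows that $\Hom IR/[C\cdot\Hom IR]\cong\Hom[A]{IA}{A}$ is finitely generated over $A$. At that point the paper invokes the patching criterion of \cite[Theorem~5.1.1(3)]{glaz:ccr} --- $\Hom IR$ is finitely generated over $R$ once both $\Hom IR/[C\cdot\Hom IR]$ is finitely generated over $A$ and $\Otimes T{\Hom IR}$ is finitely generated over $T$ --- and then spends an extra computation verifying the second condition using the exact sequence $0\to\Hom IC\to\Hom IR\to\Hom IA\to 0$ and the flat epimorphism $R\to T$. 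You instead exploit the explicit content of Lemma~\ref{lem150602a}: the surjectivity of $\Lambda$ identifies $C\cdot\Hom IR$ with $\Lambda(C)=C\lambda_1$, which lies inside the cyclic module $R\lambda_1$, so lifting generators of the quotient and adjoining $\lambda_1$ already generates $\Hom IR$. This is shorter, more elementary, and avoids the external citation; the paper's route buys a criterion that applies to arbitrary modules rather than just this particular Hom. One small simplification to your own argument: you need not arrange for $\bar\iota$ to appear among the chosen generators of $\Hom[A]{IA}{A}$ (and hence need not trace $\lambda_1$ through the isomorphisms of Lemma~\ref{lem150602a}); lift any finite generating set arbitrarily and simply append $\lambda_1$ to the list, since the containment $C\cdot\Hom IR\subseteq R\lambda_1$ does all the work.
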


\begin{proof}
As we have noted before, our assumptions imply that $R$ and $T$ are domains in this setting.
Assume that $A$ is  quasi-coherent.
By Proposition~\ref{thm150506a''}, it suffices to show that $R$ is quasi-coherent. 
Let $I$ be a finitely generated ideal of $R$. 
By Remark~\ref{disc150602a}, we need to show that $\Hom IR$ is finitely generated. 
Since every non-zero finitely generated ideal $I$ of $R$ satisfies condition (U1), we assume without loss of generality that $IT=T$. 

To show that $\Hom IR$ is finitely generated over $R$, it suffices by~\cite[Theorem~5.1.1(3)]{glaz:ccr}
to show 
that $\Hom IR/[C\cdot\Hom IR]$ is finitely generated over $A$
and 
that $\Otimes T{\Hom IR}$ is finitely generated over $T$. 
Since $A$ is quasi-coherent and a finite product of  domains, and the ideal $IA$ is finitely generated, the $A$-module
$$\Hom IR/[C\cdot\Hom IR]\cong\Hom[A]{IA}{A}$$ 
is finitely generated; see Lemma~\ref{lem150602a} for the isomorphism.

Next, consider the exact sequence
$$0\to \Hom IC\to\Hom IR\to\Hom IA\to 0$$
from the proof of Lemma~\ref{lem150602a}.
The isomorphism $\Hom IC\cong C\cong T$ from this lemma implies that this sequence has the following form:
$$0\to T\to\Hom IR\to\Hom IA\to 0.$$
As $T$ is flat over $R$, the preceding sequence yields the next exact sequence over $T$:
$$0\to \Otimes TT\to\Otimes T{\Hom IR}\to\Otimes T{\Hom IA}\to 0.$$
Since $R\to T$ is a flat epimorphism by Lemma~\ref{lem150510a}, this exact sequence has the following form:
$$0\to T\to\Otimes T{\Hom IR}\to\Otimes T{\Hom IA}\to 0.$$
We have already seen that $\Hom IA\cong\Hom[A]{IA}{A}$ is finitely generated over $A$, hence over $R$;
the isomorphism
is from Lemma~\ref{lem150602a}.
It follows that the module $\Otimes T{\Hom IA}$ is finitely generated over $T$.
Since $T$ is also finitely generated over $T$, the preceding exact sequence shows that $\Otimes T{\Hom IR}$
is finitely generated over $T$, as desired.
\end{proof}

\begin{thm}\label{thm150531a}
Assume that the regular conductor square $\left( \square \right)$ satisfies condition (FP)
and every non-zero finitely generated ideal $I$ of $R$ satisfies condition (U2) from Definition~\ref{defn150522aa}.
Assume that $A$ is a finite product of domains.
Then $R$ is  quasi-coherent if and only if $A$ is quasi-coherent; these conditions imply that $T$ is also quasi-coherent.
\end{thm}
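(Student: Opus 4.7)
The plan is to split the ``if and only if'' into two directions, handling the easier one by reduction to Theorem~\ref{thm150601a} and the harder one by adapting the Case~3 arguments of Theorems~\ref{thm150506c} and~\ref{thm150507ay}. Since (U2) implies (U1), the implication ``$A$ quasi-coherent $\Rightarrow$ $R,T$ quasi-coherent'' is an immediate consequence of Theorem~\ref{thm150601a}. Furthermore, once we know $R$ is quasi-coherent, the fact that $T$ is quasi-coherent follows from Proposition~\ref{thm150506a''}. So the substance of the argument is the remaining direction: $R$ quasi-coherent $\Rightarrow$ $A$ quasi-coherent.

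Since $A$ is a finite product of domains, Lemma~\ref{lem150602b}\eqref{lem150602b3} reduces the task to showing that $\Hom[A]{J}{A}$ is finitely generated over $A$ for every finitely generated ideal $J$ of $A$. Also, our standing hypotheses force $R$ to be a domain: by Proposition~\ref{prop150614a}, $T$ is a B\'ezout domain, and then $R$ is a domain by Remark~\ref{lem150509a}; hence by Remark~\ref{disc150602a} the quasi-coherence of $R$ is equivalent to the finite generation of $\Hom[R]{-}{R}$ on each finitely generated ideal of $R$.

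Given a nonzero finitely generated $J \subseteq A$, I will lift it to a finitely generated ideal $I \subseteq R$ with $\eta_2(I)=J$, and apply (U2) to $I$ to obtain $r,s \in R$ and $U \subseteq R$ with $UT=T=rT$ and $I \xra[\cong]{r} rI = sU \xla[\cong]{s} U$ in $R$. Because $UT=T$, Lemma~\ref{lem150602a} identifies $\Hom[A]{UA}{A}$ with $\Hom[R]{U}{R}/[C\cdot \Hom[R]{U}{R}]$, which is finitely generated over $A$ since $\Hom[R]{U}{R}$ is finitely generated over $R$. To bridge back to $J$: the element $\bar r := \eta_2(r) \in A$ is a unit in $B$ (because $rT=T$), hence a nonzero-divisor on $A \hookrightarrow B$, so multiplication by $\bar r$ gives an $A$-module isomorphism $J = IA \xra[\cong]{\bar r} rIA = sUA$, and therefore $\Hom[A]{J}{A} \cong \Hom[A]{sUA}{A}$. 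Finally, since $A = D_1 \times \cdots \times D_n$ is a finite product of domains, writing $UA = \bigoplus_i (UA)_i$ by the product decomposition and using that each $D_i$ is a domain exhibits $sUA$ as isomorphic, via multiplication by $\bar s$, to the direct summand $\bigoplus_{i:\bar s_i \neq 0}(UA)_i$ of $UA$; consequently $\Hom[A]{sUA}{A}$ is isomorphic to a direct summand of the finitely generated $A$-module $\Hom[A]{UA}{A}$, and so is finitely generated.

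The main obstacle I anticipate is this last bridging step: unlike $\bar r$, the element $\bar s = \eta_2(s)$ need not be a nonzero-divisor in $A$, so multiplication by $\bar s$ may fail to be injective on $UA$. The finite-product-of-domains hypothesis is what rescues the argument: the coordinates where $\bar s_i = 0$ split off cleanly as a summand lying in the kernel, leaving $sUA$ isomorphic to the complementary summand, which is all one needs to transfer finite generation through $\Hom[A]{-}{A}$.
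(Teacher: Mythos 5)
Your proof is correct and follows the same overall route as the paper's: the forward direction via Theorem~\ref{thm150601a} (using (U2)$\implies$(U1)) together with Proposition~\ref{thm150506a''}, and the converse by reducing to Lemma~\ref{lem150602b}, handling the unitary case $UT=T$ via Lemma~\ref{lem150602a} and the finite generation of $\Hom UR$ over the quasi-coherent domain $R$, then bridging to a general $J$ through the (U2) data. The one place you genuinely diverge is the final bridging step, and there your treatment is more careful than the paper's. The paper's Case~2 asserts that the (U2) data induce isomorphisms $J\xra[\cong]{\ol r}\ol rJ=sUA\xla[\cong]{\ol s}UA$; the first is justified because $rT=T$ forces $\ol r$ to be a unit of $B$ and hence a non-zero-divisor on $A\subseteq B$, but the second is not justified as written, since $\ol s$ may vanish in coordinates of $A=D_1\times\cdots\times D_n$ where $UA$ does not (this already happens for $\operatorname{Int}(E,D)$ when $I$ extends to an ideal of $K[X]$ generated by a polynomial vanishing at some point of $E$). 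Your repair --- observing that $sUA$ is isomorphic to the direct summand $\prod_{i:\,\ol s_i\neq 0}(UA)_i$ of $UA$, so that $\Hom[A]{sUA}{A}$ is a direct summand of the finitely generated module $\Hom[A]{UA}{A}$ --- is exactly what is needed, and it is precisely where the finite-product-of-domains hypothesis earns its keep. The remaining ingredients (quasi-coherence of $R$ giving finite generation of $\Hom UR$ via Remark~\ref{disc150602a}, since $R$ is a domain by Proposition~\ref{prop150614a} and Remark~\ref{lem150509a}) match the paper.
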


\begin{proof}
Assume that  $R$ is  quasi-coherent.
Again, we only need to show that $A$ is  quasi-coherent.
Let $J=(\ol r_1,\ldots,\ol r_n)A$ be a non-zero finitely generated ideal of $A$;
here each $r_i$ is in $R$, and $\ol r_i$ is the residue of $r_i$ in $A$.
We need to show that $\Hom[A]JA$ is finitely generated over $A$, by Lemma~\ref{lem150602b}.
We argue as in the proofs of Theorems~\ref{thm150507a} and~\ref{thm150507ay}.
Consider the ideal $I=(r_1,\ldots,r_n)R$.

Case 1: $IT=T$.
In this case, Lemma~\ref{lem150602a} implies that 
$$\Hom IR/[C\cdot\Hom IR]\cong\Hom[A]{IA}{A}=\Hom[A]{J}{A}.$$ 
Since $R$ is a quasi-coherent  domain, the $R$-module $\Hom IR$ is finitely generated over $R$.
It follows that $\Hom[A]{J}A$ is finitely generated over $R$, hence over $A$, as desired. 
This concludes Case 1.

Case 2: The general case.
By condition (U2), there are elements $r,s\in R$ and an ideal $U\subseteq R$ such that $UT=T=rT$ and
$I\xra[\cong] r rI=sU\xla[\cong] sU$.
As in the proofs of Theorems~\ref{thm150506c} and~\ref{thm150507ay},
these induce isomorphisms $J\xra[\cong]{\ol r}\ol rJ=sUA\xla[\cong]{\ol s}UA$.
By Case 1, we know that $\Hom[A]{UA}{A}$ is finitely generated, hence so is $\Hom[A]JA\cong\Hom[A]{UA}{A}$.
\end{proof}

\begin{cor}\label{cor150517a''}
Consider the regular conductor square $(\boxtimes)$ from Remark~\ref{disc150513a}.
Then $R$ is quasi-coherent
if and only if each $A_i$ is so.
\end{cor}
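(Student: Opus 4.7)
The plan is to prove this corollary essentially by chaining together two results that have already been established in the paper, in exactly the same manner as the analogous Corollaries~\ref{cor150517a} and~\ref{cor150517a'} were deduced from their respective transfer theorems.

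First, I would invoke Remark~\ref{disc150513a}, which establishes that in the square $(\boxtimes)$, the ring $T = K[X]$ is a localization of $R$ and conditions (FP), (U1), and (U2) from Definition~\ref{defn150522aa} hold for every finitely generated ideal of $R$. In particular, $T$ is flat over $R$, $C$ is principal in $T$, and every non-zero finitely generated ideal of $R$ satisfies (U2). Moreover, $A = A_1 \times \cdots \times A_r$ is a finite product of domains by construction. Thus all hypotheses of Theorem~\ref{thm150531a} are satisfied.

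Next, Theorem~\ref{thm150531a} yields the equivalence: $R$ is quasi-coherent if and only if $A = \prod_{i=1}^r A_i$ is quasi-coherent. To finish, I would apply the equivalence \eqref{lem150602b1}$\iff$\eqref{lem150602b2} of Lemma~\ref{lem150602b}, which says that the finite product $\prod_{i=1}^r A_i$ of domains is quasi-coherent if and only if each factor $A_i$ is quasi-coherent. Combining these two equivalences gives the desired conclusion.

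There is no real obstacle here; the corollary is a direct specialization, and the only thing to check carefully is that the hypotheses of Theorem~\ref{thm150531a} indeed hold in the setting of $(\boxtimes)$. This verification is entirely handled by Remark~\ref{disc150513a}, so the proof should be a one- or two-sentence citation, mirroring the proofs of Corollaries~\ref{cor150517a} and~\ref{cor150517a'}.
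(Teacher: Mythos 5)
Your proposal is correct and follows exactly the route the paper intends: Remark~\ref{disc150513a} verifies the hypotheses of Theorem~\ref{thm150531a} (just as in the proof of Corollary~\ref{cor150517a}), and the equivalence \eqref{lem150602b1}$\iff$\eqref{lem150602b2} of Lemma~\ref{lem150602b} converts quasi-coherence of the finite product $A=\prod_{i=1}^r A_i$ into quasi-coherence of each factor $A_i$. No gaps.
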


\begin{cor}\label{cor150517b''}
Let $D$ be a domain, and let $E\subseteq D$ be a finite subset.
Then the ring of integer-valued polynomials
$\operatorname{Int}(E,D)$ is quasi-coherent
if and only if $D$ is~so.%
\end{cor}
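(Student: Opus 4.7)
The plan is to mirror the proofs of Corollaries~\ref{cor150517b} and~\ref{cor150517b'}, which treated the finite conductor/coherent and generalized GCD cases respectively. The key observation is already documented in Remark~\ref{disc150513a}: if $E = \{e_1,\ldots,e_r\}$ is a finite subset of a domain $D$ with field of fractions $K$, then setting $T = K[X]$, $C = (X-e_1)\cdots(X-e_r)K[X]$, $A = \prod_{i=1}^r D$, and $B = T/C \cong \prod_{i=1}^r K$, the ring $\operatorname{Int}(E,D)$ arises as the pullback $R$ in a square of the form $(\boxtimes)$ with each $A_i = D$.

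First, I would write down this explicit conductor square
$$\xymatrix{
R=\operatorname{Int}(E,D) \ \ar@{^(->}[r] \ar@{->>}[d] & K[X] \ar@{->>}[d] \\
D^r \ \ar@{^(->}[r] & K^r
}$$
identifying it as the special case of $(\boxtimes)$ in which $A_i = D$ for all $i$, with minimal polynomials $p_i = X - e_i$ (which are trivially pairwise coprime by distinctness of the $e_i$).

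Second, I would invoke Corollary~\ref{cor150517a''}, which provides the equivalence: $R$ is quasi-coherent if and only if each $A_i = D$ is quasi-coherent. Since all factors coincide with $D$, this immediately yields the desired biconditional.

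There is essentially no obstacle here: the corollary is a direct specialization of Corollary~\ref{cor150517a''} once the pullback presentation of $\operatorname{Int}(E,D)$ from Remark~\ref{disc150513a} is recognized. The only minor point worth noting is that $D$ itself is a (trivial) finite product of domains, so no auxiliary verification beyond the already-established applicability of Theorem~\ref{thm150531a} (via Remark~\ref{disc150513a}) is required.
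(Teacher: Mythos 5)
Your proof is correct and follows essentially the same route the paper intends: exhibit $\operatorname{Int}(E,D)$ as the pullback $R$ in the special case of the square $(\boxtimes)$ with $A_i=D$ for all $i$ (exactly as in the proof of Corollary~\ref{cor150517b}), and then apply Corollary~\ref{cor150517a''}. No gaps.
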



\providecommand{\bysame}{\leavevmode\hbox to3em{\hrulefill}\thinspace}
\providecommand{\MR}{\relax\ifhmode\unskip\space\fi MR }
\providecommand{\MRhref}[2]{%
  \href{http://www.ams.org/mathscinet-getitem?mr=#1}{#2}
}
\providecommand{\href}[2]{#2}

\end{document}